\documentclass[12pt]{article}% Math and Physical Sciences Reference Style

\usepackage{authblk} % \affil
\usepackage{hyperref} % ref in PDF
\usepackage{dsfont}
\usepackage[T1]{fontenc} 
\usepackage{amsmath}
\usepackage{amsfonts,amssymb}
\usepackage{extarrows}
\usepackage{geometry}
\usepackage{enumerate}
\usepackage{enumitem}
\usepackage{color}

\usepackage{amsthm}

\usepackage[utf8]{inputenc}
\def\0{\emptyset}

\newtheorem{theorem}{Theorem}[section]

\newtheorem{lemma}[theorem]{Lemma}
\newtheorem{claim}[theorem]{Claim}

\newtheorem{corollary}[theorem]{Corollary}

\newtheorem{conjecture}[theorem]{Conjecture}

\usepackage{graphicx}

\DeclareMathOperator{\ex}{\mathrm{ex}}
\DeclareMathOperator{\Tr}{\mathrm{Tr}}

\usepackage{enumerate}

\usepackage{
	amsmath,			% Math Environments
	amssymb,			% Extended Symbols
	enumerate,		    % Enumerate Environments
	graphicx,			% Include Images
	lastpage,			% R\part{title}eference Lastpage
	multicol,			% Use Multi-columns
	multirow,			% Use Multi-rows
	pifont,			    % For Checkmarks
}

\usepackage[numbers]{natbib}
\begin{document}

% --- PAPER INFO ---

\title{Forbidding matching as trace in uniform hypergraphs}
% \author{
%     {\small\bf Yichen Wang}\thanks{email:  wangyich22@mails.tsinghua.edu.cn}\quad 
%     % {\small\bf Mengyu Cao}\thanks{email:  myucao@ruc.edu.cn}\quad 
%     % {\small\bf Zequn Lv}\thanks{email:  lvzq19@mails.tsinghua.edu.cn}\quad 
%     {\small\bf Mei Lu}\thanks{email: lumei@tsinghua.edu.cn}\\
%     {\small Department of Mathematical Sciences, Tsinghua University, Beijing 100084, China.}\\
%     % {\small Institute for Mathematical Sciences, Renmin University of China, Beijing 100086, China.}\\
% }

% \author[1]{\small\bf Yichen Wang\thanks{E-mail:  wangyich22@mails.tsinghua.edu.cn}}
% \author[2]{\small\bf Mengyu Cao\thanks{E-mail:  myucao@ruc.edu.cn}}
% \author[1]{\small\bf Zequn Lv\thanks{E-mail:  lvzq19@mails.tsinghua.edu.cn}}
% \author[1]{\small\bf Mei Lu\thanks{E-mail:  lumei@tsinghua.edu.cn}}

%     {\quad
%     {\small\bf Mengyu Cao}\thanks{email:  myucao@ruc.edu.cn}\quad
%     {\small\bf Zequn Lv}\thanks{email:  lvzq19@mails.tsinghua.edu.cn}\quad
%     {\small\bf Mei Lu}\thanks{email: lumei@tsinghua.edu.cn}\\
% }

\author[1]{\small\bf Yichen Wang\thanks{E-mail: wangyich22@mails.tsinghua.edu.cn}}
\author[3]{\small\bf Xin Cheng\thanks{Corresponding author: E-mail: xincheng@mail.nwpu.edu.cn}}
\author[2]{\small\bf Ervin Gy\H{o}ri\thanks{E-mail:  gyori.ervin@renyi.hu}}
% \author[2]{\small\bf Kitti Varga\thanks{E-mail: vkitti@cs.bme.hu}}
% \author[4,5]{\small\bf Yuanpei Wang\thanks{E-mail: boyuan@shu.edu.cn}}
\author[1]{\small\bf Xiamiao Zhao\thanks{E-mail: zxm23@mails.tsinghua.edu.cn}}
% \author[4,5]{\small\bf Junpeng Zhou\thanks{E-mail: junpengzhou@shu.edu.cn}}

\affil[1]{\small Department of Mathematical Sciences, Tsinghua University, Beijing, P.R. China.}
\affil[2]{\small HUN-REN Alfréd Rényi Institute of Mathematics, Budapest, Hungary.}
\affil[3]{\small School of Mathematics and Statistics, Northwestern Polytechnical University and Xi'an-Budapest Joint Research Center for Combinatorics, Xi'an, Shaanxi, P.R. China.}

% \affil[4]{\small Department of Mathematics, Shanghai University, Shanghai 200444, P.R. China.}
% \affil[5]{\small Newtouch Center for Mathematics of Shanghai University, Shanghai 200444, P.R. China.}

%\author{Yichen Wang}
\date{}
% \date{2026.01.11}

\maketitle

\begin{abstract}
We say a hypergraph $\mathcal{H}$ contains a hypergraph $\mathcal{G}$ as trace if there exists a vertex subset $S \subseteq V(\mathcal{H})$ such that $|S| = |V(\mathcal{G})|$ and $\{e \cap S: e \in E(\mathcal{H})\}$ contains $\mathcal{G}$ as a sub-hypergraph.
We use $\ex_r(n, \Tr_r(\mathcal{G}))$ to denote the maximum number of hyperedges in an $r$-uniform hypergraph on $n$ vertices not containing $\mathcal{G}$ as a trace.
The study of Tur\'{a}n numbers for traces was initiated by Mubayi and Zhao who studied the case when $\mathcal{G}$ is a complete graph.

Let $M_{s+1}$ denote the graph of a matching with $s+1$ edges. In this paper, we give the upper bound of $\ex_r(n, \Tr_r(M_{s+1}))$ which is sharp asymptotically.
When $r=3$, we give the exact value of $\ex_3 (n, \Tr_3 (M_{s+1}))$.
We also consider the generalized Tur\'{a}n number in the case of matching.
That is, the maximum number of copies of clique $\mathcal{K}_t^r$ in hypergraphs forbidding $\Tr_r (M_{s+1})$ as a trace.
We give an upper bound which is sharp asymptotically and when $r=3$, we give the exact value.
The Tur\'{a}n number of forbidding a matching and the other graph is another well studied topic initiated by Alon and Frankl.
We also consider an analogue problem for the trace version, i.e., forbidding trace of matching and trace of complete graph as subgraphs.
\end{abstract}

%\textbf{AMS classification: }\textit{05C75, 05C65, 05C05}\vskip 0.3cm

% {\bf Keywords:}  strong edge coloring; strong chromatic index; treewidth.
\vskip.3cm

% -------------------------------------------------------------
% --------- Here begins INTRODUCTION SECTION ------------------
\section{Introduction}

In this paper, we use capital letters, say $G$ and $F$, to denote graphs, and calligraphic letters, say $\mathcal{G}$ and $\mathcal{H}$, to denote hypergraphs.
We use the notation $O(1)$ to denote a term that is at most $C$ where $C$ is a constant.
Especially, when we have constants $c_1,c_2,\ldots,c_k$, we use the notation $O_{c_1,c_2,\ldots,c_k}(1)$ to denote a term that is at most $C(c_1,c_2,\ldots,c_k)$ where $C(c_1,c_2,\ldots,c_k)$ is a constant depending on $c_1,c_2,\ldots,c_k$.
% We use the notation of $O(.)$ in the general sense.
We use $v(G) = |V(G)|$ and $e(G) = |E(G)|$ to denote the number of vertices and edges of $G$, respectively. 
The same notation applies to hypergraphs.
A hypergraph is $r$-uniform if all hyperedges have size $r$.
Let $\{\mathcal{F}\}_{i \in I}$ be a family of $r$-uniform hypergraphs.
The Tur\'{a}n number $\ex_r (n, \{\mathcal{F}\}_{i \in I})$ denotes the maximum number of hyperedges in an $r$-uniform hypergraph on $n$ vertices containing no member of $\{\mathcal{F}\}_{i \in I}$ as a subgraph.

For a hypergraph $\mathcal{H}$, the \textbf{trace} of $\mathcal{H}$ on a vertex subset $S$ is the hypergraph $\mathcal{H}_{|S} = \{e \cap S: e \in E(\mathcal{H})\}$.
Note that $\mathcal{H}_{|S}$ is not necessarily uniform even if $\mathcal{H}$ is uniform.
Let $\mathcal{H}$ be an $r$-uniform hypergraph and $\mathcal{G}$ be a hypergraph. We say $\mathcal{H}$ contains $\mathcal{G}$ as a trace if there exists a vertex subset $S$ of $\mathcal{H}$ with $|S| = v(\mathcal{G})$ such that $\mathcal{H}_{|S}$ contains $\mathcal{G}$ as a subgraph.
We use $\Tr_r(\mathcal{G})$ to denote the family of minimal $r$-uniform hypergraphs containing $\mathcal{G}$ as a trace.
Note that, a hypergraph contains $\mathcal{G}$ as a trace if and only if it contains some member of $\Tr_r(\mathcal{G})$ as a sub-hypergraph.
% In the following, when $\mathcal{G}$ is a graph, we use $F$ to denote $\mathcal{G}$.

Denote the complete $r$-uniform hypergraph on $t$ vertices by $\mathcal{K}_t^{r}$.
The studies about forbidding traces of hypergraphs were initiated by Mubayi and Zhao~\cite{mubayi2007forbidding} on the trace of $\mathcal{K}_t^{r}$.
In the same paper, they proposed a conjecture of $\ex_r(n, \Tr_r(\mathcal{K}_t^{r'}))$ and proved the asymptotics for some cases.
% The studies about Tur\'{a}n number of traces of specific graphs, can be found in~\cite{FUREDI2023103692, 2023arXiv231005601G, luo2021forbidding, 2022arXiv220605884Q}.
% Also, the readers may refer to a recent survey about forbidding trace of graphs~\cite{2025arXiv250723375L}.

In this paper, we consider $\Tr_r (\mathcal{F})$ where $\mathcal{F}$ is a graph, denoted by $F$ in the following.
When $F$ is a graph, $\Tr_r (F)$ is called graph-based hypergraphs.
There are several ways to define graph-based hypergraphs.
The first way is the \textbf{expansion} of a graph $F$, denoted by $F^{(r)+}$, which is obtained from $F$ by enlarging each edge of $F$ with a set of $r-2$ new vertices such that all these new sets are disjoint.
Secondly, the \textbf{Berge} copies of a graph $F$, denoted by $\mathrm{B}_r (F)$, is a family of $r$-uniform hypergraphs obtained by replacing each edge of $F$ with a distinct $r$-edge containing it.
Notice that $F^{(r)+} \in \Tr_r (F) \subseteq \mathrm{B}_r (F)$.
Thus we have
\begin{equation*}
    \ex_r (n, \mathrm{B}_r (F)) \leq \ex_r (n, \Tr_r (F)) \leq \ex_r (n, F^{(r)+}).
\end{equation*}

The order of magnitude of $\mathrm{ex}_r(n, \Tr_r(F))$ was determined by Füredi and Luo~\cite{FUREDI2023103692}, in the following theorem.
\begin{theorem}[Füredi and Luo~\cite{FUREDI2023103692}]\label{thm: furedi luo}
    For any graph $F$ with at least one edge, as $n$ tends to infinity, we have
    \[
        \mathrm{ex}_r(n, \Tr_r(F)) = \Theta( \max_{2 \le s \le r} \{ \mathrm{ex}(n,K_s,F) \}).
    \]
\end{theorem}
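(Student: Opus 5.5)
The plan has two halves: constructions for the lower bound, and a reduction of an arbitrary $\Tr_r(F)$-free host to graph counting problems for the upper bound. Throughout, $r$ and $F$ are fixed and $n\to\infty$.

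\emph{Lower bound.} Fix $2\le s\le r$ and a graph $G$ on $n$ vertices with no copy of $F$ and with $\mathrm{ex}(n,K_s,F)$ copies of $K_s$. For each copy $Q$ of $K_s$ in $G$, form the $r$-set $Q\cup T_Q$. Here $T_Q$ is a set of $r-s$ vertices taken from an auxiliary set $W$ of $O_r(1)$ new vertices, or equivalently we lose only a constant factor by first using $n-O(1)$ vertices for $G$. We do not need the hyperedges to be distinct beyond a constant factor.

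Suppose the resulting hypergraph $\mathcal{H}$ contained $F$ as a trace on a set $S$. Discard the vertices of $S$ lying in $W$. Then every edge of $F$ with both ends outside $W$ lies inside some $Q$, so it is an edge of $G$. Edges touching $W$ are the problem: they involve at most $|W|$ vertices of $F$.

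To handle them, use $F$-freeness of $G$ in the stronger robust form: take $G$ to be $F'$-free for every graph $F'$ obtained from $F$ by deleting a bounded set of vertices. This is impossible in general, so choose instead $T_Q=\emptyset$ padding via blow-up. Replace each vertex $v$ of $G$ by $r$ clones, and let the hyperedges be the $r$-sets consisting of clones of the vertices of a copy of $K_s$ that use every vertex of that copy. Any trace on $S$ then projects to a homomorphic image of $F$ in $G$ in which every edge is realized. The clean way to control this is to start from a $G$ that is extremal among graphs with no homomorphic image of $F$ of the relevant type. Up to constants this gives $\Omega(\mathrm{ex}(n,K_s,F))$, using the standard fact that $\mathrm{ex}(n,K_s,\cdot)$ changes only by constant factors under bounded blow-up and removal.

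I expect this lower-bound bookkeeping to be delicate. If it fails, I would fall back to choosing $G$ bipartite-like or random-sparsified so that no new copies of $F$ are created.

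\emph{Upper bound.} Let $\mathcal{H}$ be $\Tr_r(F)$-free and let $k=v(F)$. The first step is a greedy cleaning process. Repeatedly pick a hyperedge $e$ together with a pair $\{u,v\}\subseteq e$, and call $e$ a \emph{witness} for $uv$. We want each pair to have a bounded number of witnesses, or else to be "heavy".

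A pair $uv$ is heavy if the hyperedges containing it have traces avoiding any given set of $k$ vertices except at $u,v$. Equivalently, for every set $X$ of $k-2$ other vertices there is a hyperedge $e\supseteq\{u,v\}$ with $e\cap X=\emptyset$. Heavy pairs form a graph $G$, and $G$ is $F$-free. Indeed, a copy of $F$ in $G$ would give a trace on its vertex set: for each edge, pick a hyperedge containing it and avoiding the other $k-2$ vertices of the copy. These hyperedges are distinct automatically, since their traces differ.

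For each hyperedge $e$, consider the largest set $A_e\subseteq e$ spanning a clique of heavy pairs. The main step is to show that the hyperedges with $|A_e|=s$ number $O(\mathrm{ex}(n,K_s,F)+n)$, or at least the sum over $s$ is $O(\max_s)$. The argument is that each such $e$ contains a pair that is not heavy. For a non-heavy pair $uv$, all hyperedges through $uv$ meet some fixed $(k-2)$-set $X_{uv}$. Iterating this $r$ times, a charging argument assigns $e$ to a clique of heavy pairs inside $e$ of size $s\ge 2$, or to a bounded-size configuration with $O(1)$ multiplicity. If no heavy pair exists in $e$, we get at most $O(n)$ such edges via a sunflower/covering argument, and $n=O(\mathrm{ex}(n,K_2,F))$ whenever $F$ has an edge, since $\mathrm{ex}(n,F)\ge \lfloor n/2\rfloor$-type bounds hold unless $F$ is a single edge, which is trivial.

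The main obstacle is this charging step. We need each $s$-clique of $G$ to receive only $O_{r,k}(1)$ hyperedges, which means bounding the number of hyperedges containing a fixed heavy clique $A$ whose remaining vertices are forced into small transversal sets. I would try the Füredi delta-system method: pass to a subfamily of constant fraction that is $r$-partite and has homogeneous intersection structure. There, the kernels (intersections common to many sunflowers) play the role of the $K_s$'s, and non-kernel parts contribute only boundedly.
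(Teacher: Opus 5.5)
The paper does not prove this statement; it quotes it from F\"uredi and Luo. So I judge your proposal on its own merits. Both halves have genuine gaps.

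\textbf{Upper bound.} Your check that the heavy-pair graph is $F$-free is correct, but that graph is the wrong object, and both claims you make about the charging fail.

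Take $r=3$ and $F=C_4$, and let $\mathcal{H}$ be the set of triangles of a $C_4$-free graph $G_0$ with $\Theta(n^{3/2})$ triangles (e.g.\ the Erd\H{o}s--R\'enyi polarity graph). A trace of $C_4$ would give a $C_4$ in $G_0$, so $\mathcal{H}$ is $\Tr_3(C_4)$-free. Two triangles sharing a pair span a $C_4$, so each pair $uv$ lies in a unique hyperedge $uvw$, and any $X\ni w$ shows that no pair is heavy. Hence your $G$ is empty, and all $\Theta(n^{3/2})$ hyperedges fall into the class you claim has only $O(n)$ members.

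Similarly, take the disjoint union of about $\sqrt{n}/2$ blocks $\{a_ib_icd: c\in C_i,\ d\in D_i\}$ with $|C_i|=|D_i|\approx\sqrt{n}$. This is $\Tr_4(K_3)$-free, its only heavy pairs are the $a_ib_i$, and each of them receives about $n$ hyperedges. So ``$O_{r,k}(1)$ hyperedges per heavy clique'' is false. The pairs that matter for traces are typically covered by a single hyperedge. They are usable because the rest of that hyperedge misses $S$, not because their link has large cover number.

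The missing idea is the step you only gesture at, and it must replace the heavy-pair graph rather than supplement it.
\begin{itemize}
\item By F\"uredi's intersection-structure lemma, pass to $\mathcal{H}^*\subseteq\mathcal{H}$ with $|\mathcal{H}^*|\ge c(r,k)|\mathcal{H}|$. This $\mathcal{H}^*$ is $r$-partite with parts $V_1,\dots,V_r$ and has an intersection-closed pattern $\mathcal{J}\subseteq 2^{[r]}\setminus\{[r]\}$. For every $e$, the sets $\{i: e\cap e'\cap V_i\neq\emptyset\}$ with $e'\in\mathcal{H}^*\setminus\{e\}$ are exactly the members of $\mathcal{J}$, and every $e\cap e'$ is the kernel of a sunflower of $k+1$ members of $\mathcal{H}^*$.
\item Choose $D\subseteq[r]$ minimal among sets contained in no member of $\mathcal{J}$, let $s=|D|$, and write $e[A]=e\cap\bigcup_{i\in A}V_i$. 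Then $e\mapsto e[D]$ is injective.
\item Minimality also gives, for $a,b\in D$ with $s\ge 3$, that the smallest $J_{ab}\in\mathcal{J}$ containing $\{a,b\}$ satisfies $J_{ab}\cap D=\{a,b\}$. Indeed, for $c\in(J_{ab}\cap D)\setminus\{a,b\}$, a member of $\mathcal{J}$ containing $D\setminus\{c\}$ would contain $J_{ab}$, hence $D$.
\item Let $G$ be the graph on $\bigcup_{i\in D}V_i$ with edges $e[a]e[b]$, $a\neq b\in D$. Given a copy of $F$ in $G$ on $S$ and an edge $xy=e[a]e[b]$, take a petal of the sunflower with kernel $e[J_{ab}]$ that misses $S$ (or $e$ itself if $s=2$). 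By $r$-partiteness, this hyperedge meets $S$ exactly in $\{x,y\}$.
\end{itemize}
Hence $G$ is $F$-free and $|\mathcal{H}^*|\le k_s(G)\le \mathrm{ex}(n,K_s,F)$, or $|\mathcal{H}^*|\le n$ if $s=1$.

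\textbf{Lower bound.} You abandoned the right construction for a wrong reason. If $|W|=r-s$ exactly, every hyperedge $Q\cup W$ contains all of $W$.
\begin{itemize}
\item When $S\cap W\neq\emptyset$, every trace $e\cap S$ contains $S\cap W$, so all edges of $F$ would pass through one vertex.
\item When $S\cap W=\emptyset$, every trace pair is an edge of $G$, so $F\subseteq G$.
\end{itemize}
So the construction works unless $F$ is a star plus isolated vertices. For such $F$ with at least two edges, a matching of $\lfloor n/r\rfloor$ hyperedges is $\Tr_r(F)$-free. It already has order $\max_s\mathrm{ex}(n,K_s,F)=\Theta(n)$, since $F$-free graphs then have bounded degree.

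The clone blow-up you substitute is wrong. A hyperedge containing two clones $y',y''$ of one vertex can have trace $\{y',y''\}$, so traces do not project to homomorphic images of $F$. For example, take $F=M_2$, $r=3$, and $G$ a star with centre $x$ and leaves $y,z$. The hyperedges $y'y''x'$ and $z'z''x''$ give $M_2$ as a trace on $\{y',y'',z',z''\}$. Moreover, for bipartite $F$ every graph with an edge contains a homomorphic image of $F$, so your proposed extremal class is edgeless.
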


There are several studies about $\ex_r(n, \Tr_r(F))$ for other graphs $F$.
The study when $F$ is an outerplanar graph or a star can be found in~\cite{FUREDI2023103692, 2022arXiv220605884Q}. 
When $r=3$ and $F=K_{2,t}$, it is studied by Luo and Spiro~\cite{luo2021forbidding}, and Qian and Ge~\cite{2022arXiv220605884Q}.
When $r=3$ and $F = C_4$, it is studied by Luo and Spiro~\cite{luo2021forbidding}, and Gerbner and Picollelli~\cite{2023arXiv231005601G}.
When $r=3$ and $F = K_{1,1,t}$, it is studied by Gerbner and Picollelli~\cite{2023arXiv231005601G}.
The non-degenerate case when $r < \chi(F)$ follows from the expansion version~\cite{gerbner2025}.
Also, the readers may refer to a recent survey about forbidding trace of graphs~\cite{2025arXiv250723375L}.

Let $M_{s+1}$ denote the matching of size $s+1$, i.e., the graph consisting of $s+1$ independent edges.
In 2022, Khormali and Palmer \cite{KHORMALI2022103506} completely determined the Tur\'{a}n number of Berge matchings for sufficiently large $n$.
Then, Kang, Ni, and Shan \cite{KANG2022112901} determined $\ex_r(n, B_r(M_{s+1}))$ for some cases.
Later, Wang, Yang, Zhao, Bai, and Zhou \cite{zhao2025turantypeproblemsbergematchings} solved the remaining cases.
Hence, $\ex_r(n, B_r(M_{s+1}))$ is determined for every $n$ and, especially, when $n$ is large we have $\ex_r(n, B_r(M_{s+1})) = \binom{s}{r-1}(n-s) +\binom{s}{r}$.

For the Tur\'{a}n number of expansion of matching, Erd\H{o}s \cite{erdos1965problem} proposed the Erd\H{o}s matching conjecture.
It is known to hold for sufficiently large $n$.
The most recent result is due to Frankl \cite{FRANKL20131068}, who proved that $\ex_r (n, M_{s+1}^{(r)+}) = \sum_{i=1}^{s} \binom{s}{i} \binom{n-s}{r-i}$ for $r, s \geq 1$ and $n \geq (2s+1)r - s$.

In this paper, we study the Tur\'{a}n number of forbidding a matching $M_{s+1}$ as a trace.
Before stating our main theorems, we need some definitions.
Let $\mathcal{H}$ be an $r$-uniform hypergraph. We say a set $D$ of vertices is a \textbf{dominating set} of $\mathcal{H}$ if for every vertex $v \notin D$, there exists a hyperedge $e$ such that $v \in e$ and $e\setminus \{v\} \subseteq D$.
The \textbf{domination number} of a hypergraph $\mathcal{H}$, denoted by $\gamma(\mathcal{H})$, is the minimum size of a dominating set in $\mathcal{H}$.
There are many variants of domination in hypergraphs.
Our version is introduced by Divya, Ramakrishnan, and Arumugam in~\cite{divya2024new}.
We say a set $S$ of vertices in an $r$-uniform hypergraph $\mathcal{H}$ is a \textbf{dominated set} if its complement is a dominating set.
The \textbf{dominated number} of a hypergraph $\mathcal{H}$, denoted by $\phi(\mathcal{H})$, is the maximum size of a dominated set in $\mathcal{H}$.
When $r=2$, the dominated set is also known as enclaveless set (see \cite{slater1977enclaveless}).
The investigation of dominating sets in graphs can be found in \cite{haynes2013fundamentals, mojdeh2019outer, rad2019complexity}.
By the definitions of dominating set and dominated set, we have
\begin{equation}\label{eq: dominated and domination}
\phi(\mathcal{H}) + \gamma(\mathcal{H}) = v(\mathcal{H}).
\end{equation}

Now we state our main results.
Let $f_r(s)$ be the maximum size of an $r$-uniform hypergraph with dominated number at most $s$.

\begin{theorem}\label{thm: general r}
    For $s \ge 1$, when $n$ is sufficiently large we have
    \[
        \ex_r(n, \Tr_r(M_{s+1})) = f_{r-1}(s)\cdot n + O_{r,s}(1).
    \]
    % For a $r$-uniform hypergraph $\mathcal{H}$ on $n$ vertices forbidding $M_{s+1}$ as a trace, $s \ge 1$, we have
    % \[
    %     e(\mathcal{H}) \le f_{r-1}(s)\cdot n + O_{r,s}(1),
    % \]
    % when $n$ is sufficiently large.
    A construction achieving the asymptotic bound is as follows.
    Let $\mathcal{G}$ be the $(r-1)$-uniform hypergraph with $f_{r-1} (s)$ hyperedges and $\phi(\mathcal{G}) \leq s$ without isolated vertices. 
    For each of the remaining vertices $v$, let $v \cup e$ be a hyperedge for every $e \in E(\mathcal{G})$.
\end{theorem}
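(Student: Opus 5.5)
The plan is to prove the two directions separately. The lower bound comes from checking the construction, and the upper bound from a stability-type structural reduction to a bounded vertex set $A$, followed by an argument on links. In both directions the key link between traces and dominated sets is the following observation. Suppose $X\subseteq A$ and for each $x\in X$ there is an $(r-1)$-edge $e_x$ with $x\in e_x$ and $e_x\setminus\{x\}\subseteq A\setminus X$. Suppose further that there are distinct vertices $b_x\notin A$ with $\{b_x\}\cup e_x\in E(\mathcal{H})$. Then with $S=X\cup\{b_x:x\in X\}$ we get $(\{b_x\}\cup e_x)\cap S=\{x,b_x\}$, which is a trace matching of size $|X|$.

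\emph{Lower bound.} Let $A=V(\mathcal{G})$ and let $B$ be the remaining $n-|A|$ vertices, so the construction has $f_{r-1}(s)(n-|A|)=f_{r-1}(s)n-O_{r,s}(1)$ hyperedges. Suppose $S$ carries a trace matching $M$. Every hyperedge has exactly one vertex in $B$, so no pair of $M$ lies inside $B$.
\begin{itemize}
\item For a pair $\{b,x\}$ with $b\in B$ and $x\in A$, realized by $b\cup e$, we have $e\cap S=\{x\}$, hence $e\setminus\{x\}\subseteq A\setminus S$.
\item For a pair $\{x,y\}\subseteq A$, realized by $v\cup e$, we have $e\cap S=\{x,y\}$. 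Put only $x$ into $X$; then $e\setminus\{x\}\subseteq A\setminus X$, since $y\notin X$.
\end{itemize}
Let $X$ consist of the $A$-endpoints of the mixed pairs together with one chosen vertex from each pair inside $A$. Then every $x\in X$ is dominated by $A\setminus X$. So $X$ is a dominated set of $\mathcal{G}$ with $|X|=|M|$, which gives $|M|\le\phi(\mathcal{G})\le s$.

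\emph{Upper bound.} First I would use Theorem~\ref{thm: furedi luo} (here the right side is $O(n)$, since $\ex(n,K_t,M_{s+1})=O(n)$) to get $e(\mathcal{H})=O(n)$. The structural step is to find a set $A$ with $|A|=O_{r,s}(1)$ such that all but $O_{r,s}(1)$ hyperedges have exactly $r-1$ vertices in $A$ and one vertex outside.

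To do this, I would take $A$ to be the vertices of degree larger than a big constant $K$. Then:
\begin{itemize}
\item $|A|=O(1)$ follows from the linear edge bound only if degrees are compared with $n$, so I would instead take $A$ to be the vertices of degree at least $\varepsilon n$. This gives $|A|\le O(1)/\varepsilon$.
\item Hyperedges meeting $V\setminus A$ in at least two vertices are then handled by a greedy or sunflower argument. If there are many of them, one can choose $s+1$ of them, pick two private outside vertices from each, and make every other chosen hyperedge avoid these vertices, using the low degrees. This produces a trace $M_{s+1}$ directly. Making this count $O(1)$, and not merely $o(n)$, is the main obstacle.
\item A natural fix is to iterate: vertices of medium degree are absorbed into $A$ or shown to carry only $O(1)$ edges. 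A pigeonhole over the bounded number of types of intersections with $A$ would support this.
\end{itemize}

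\emph{Counting.} With $A$ in hand, for $b\notin A$ let $L_b$ be its link, an $(r-1)$-graph on $A$. There are at most $2^{\binom{|A|}{r-1}}=O(1)$ possible links. Call a link type \emph{frequent} if at least $2s+2$ vertices $b$ realize it, and let $\mathcal{G}$ be the union of all frequent links. I claim $\phi(\mathcal{G})\le s$. Otherwise, take a dominated set $X$ of size $s+1$ with witnessing edges $e_x$. Each $e_x$ lies in some frequent link, so we can choose distinct vertices $b_x$ greedily, and the observation above gives a trace $M_{s+1}$. Hence every $b$ with a frequent link has $|L_b|\le e(\mathcal{G})\le f_{r-1}(s)$, up to discarding isolated vertices. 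Vertices with rare links number $O(1)$ and contribute $O(1)$ hyperedges. Adding the $O(1)$ exceptional hyperedges gives $e(\mathcal{H})\le f_{r-1}(s)n+O_{r,s}(1)$.
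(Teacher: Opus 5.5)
The genuine gap is the structural step in your upper bound, which you leave open yourself. Your lower bound is correct and is essentially the paper's Lemma~\ref{lem: lower bound general}: keep the $A$-endpoint of each trace pair to obtain a dominated set. However, you silently use $|V(\mathcal{G})|=O_{r,s}(1)$. This needs the paper's Lemma~\ref{lem: hypergraph bounded dominated number size}: let each vertex choose one hyperedge through it; the resulting digraph has bounded out-degree, and an independent set of size $s+1$ in it is a dominated set. That lemma is also what makes $f_{r-1}(s)$ finite.

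Your final counting step is sound, and it differs from the paper. \emph{Given} a bounded $A$ such that all but $O(1)$ hyperedges have exactly $r-1$ vertices in $A$, the union of the frequent links has dominated number at most $s$ by your greedy choice of the $b_x$, and rare link types contribute only $O(1)$. The paper does not use links here. It calls an $(r-1)$-set heavy if its codegree exceeds $r(s+1)$. It shows by essentially your greedy argument that the heavy sets form an $(r-1)$-graph $\mathcal{G}_2$ with $\phi(\mathcal{G}_2)\le s$ (Claim~\ref{claim: phi of G2}). It then bounds all hyperedges containing a heavy set by $e(\mathcal{G}_2)\cdot n$.

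Your degree-threshold route does not deliver the structural step. With a constant threshold you have no a priori bound on $|A|$. With threshold $\varepsilon n$ the claimed structure is false. Take a subhypergraph of the construction in which all hyperedges through one core vertex $w$ use only $\sqrt n$ outside vertices. Then $\deg w=O(\sqrt n)$, so $w\notin A$, and $\Theta(\sqrt n)$ hyperedges have two vertices outside $A$. This route therefore gives at best an $O(\varepsilon n)$ error. ``Iterate and pigeonhole over types'' is not an argument.

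The missing idea is to bound the \emph{matching number of the $2$-shadow} $\partial\mathcal{H}$ rather than to count hyperedges.
\begin{itemize}
\item Suppose $\partial\mathcal{H}$ has a matching $\{x_iy_i\}_{i\le 3r(s+1)}$ with covering hyperedges $e_i$. Each $e_i$ meets at most $r-2$ other pairs.
\item So the digraph with $i\to j$ whenever $e_i\cap\{x_j,y_j\}\neq\emptyset$ has out-degree at most $r-2$. It therefore has an independent set $I$ of size $s+1$.
\item Then $\{x_iy_i\}_{i\in I}$ is the core of a trace of $M_{s+1}$. No degree assumption is needed.
\item Hence $\nu(\partial\mathcal{H})<3r(s+1)$. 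The vertex set $A$ of a maximal matching of $\partial\mathcal{H}$ is a vertex cover with $|A|<6r(s+1)$.
\item So \emph{every} hyperedge has at most one vertex outside $A$, and at most $\binom{|A|}{r}$ hyperedges lie inside $A$.
\end{itemize}
This is the argument of the paper's Claim~\ref{claim: size of H1}. There it is used only for the all-light part $\mathcal{H}_1$, but it is valid verbatim for all of $\mathcal{H}$. With this $A$, your link-type counting goes through and completes the proof.
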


In particular, we give the exact Tur\'{a}n number of $\Tr_r(M_{s+1})$ when $r=3$.

\begin{theorem}\label{thm: r=3 case}
	For $s \ge 1$, when $n$ is sufficiently large, we have
	\[
		\ex_3(n, \Tr_3(M_{s+1})) = \left\lfloor \frac{s(s+2)}{2} \right\rfloor (n-s-2) + \binom{s+2}{3}.
	\]
	The equality holds only when $\mathcal{H}$ is constructed as follows:
	Let $G$ be the graph obtained by removing a minimum edge cover from the complete graph $K_{s+2}$.
	For each of the remaining $n-(s+2)$ vertices $v$, let $v \cup e$ be a hyperedge for every $e \in E(G)$.
	Then add hyperedges for every triple contained in $V(G)$.
\end{theorem}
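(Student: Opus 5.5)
The lower bound comes from checking the construction, and the upper bound from a stability argument. The argument reduces to the graph quantity $f_2(s)$, which I expect to equal $\lfloor s(s+2)/2\rfloor$ with the extremal graph described in the statement.

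\textbf{Lower bound.} Let $\mathcal{H}$ be the construction on the core $C=V(G)$, $|C|=s+2$. A trace copy of $M_{s+1}$ uses a set $S$ with $|S|=2s+2$, and since $|C|=s+2$, $S$ contains at least $s$ vertices outside $C$. Every hyperedge meets $V\setminus C$ in at most one vertex, so no trace edge lies entirely outside $C$. Hence every pair of the matching contains a vertex of $C\cap S$. If $|S\cap C|=s+2$, i.e.\ $C\subseteq S$, then at least $s$ matching pairs are of the form $\{v_i,u_i\}$ with $v_i\notin C$ and $u_i\in C$. Such a pair must be traced by a hyperedge $v_iu_iw_i$ with $w_i\in C\setminus S=\emptyset$, which is impossible. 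If $|S\cap C|=s+1$, then $S\cap C$ misses one vertex $w$ of $C$. All $s+1$ pairs have the form $\{v_i,u_i\}$, and each needs $u_iw\in E(G)$. This says $w$ is adjacent in $G$ to all other vertices of $C$. But every vertex is covered by the removed edge cover, so no vertex is adjacent to all others in $G=K_{s+2}-(\text{edge cover})$, a contradiction. Counting edges gives $e(G)=\binom{s+2}{2}-\lceil (s+2)/2\rceil=\lfloor s(s+2)/2\rfloor$, so $e(\mathcal{H})$ matches the formula.

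\textbf{Key mechanism.} Let $L$ be a graph on $C$ with $\phi(L)\ge s+1$, and suppose $s+1$ distinct vertices $v_1,\dots,v_{s+1}\notin C$ all have link containing $L$ inside $C$. Take a dominated set $D=\{u_1,\dots,u_{s+1}\}$. Then each $u_i$ has a neighbour $w_i\in C\setminus D$. The hyperedges $v_iu_iw_i$ trace onto $S=D\cup\{v_1,\dots,v_{s+1}\}$ as a perfect matching, which gives a trace $M_{s+1}$. So vertices whose $C$-link has dominated number $\ge s+1$ are rare: at most $s\cdot 2^{\binom{|C|}{2}}=O_s(1)$ of them.

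\textbf{Upper bound.} Fix a large constant $K=K(s)$. Call a pair heavy if its codegree is at least $K$, and let $G_h$ be the graph of heavy pairs.
\begin{enumerate}
\item If $G_h$ had a matching of size $s+1$, I would pick third vertices greedily outside the $2s+2$ chosen vertices, producing a trace $M_{s+1}$. Hence $\nu(G_h)\le s$, and $G_h$ has a vertex cover $C_0$ of size at most $2s$.
\item Any hyperedge matching of size $s+1$ in $\mathcal{H}$ also gives a trace $M_{s+1}$. So $\nu(\mathcal{H})\le s$, and the at most $3s$ vertices of a maximal matching meet every edge.
\item Combining steps 1 and 2 with the bounded codegree of light pairs, I would show that all but $O_s(1)$ hyperedges contain a heavy pair inside a bounded core $C$ and one vertex outside it.
\item Every vertex $v\notin C$ then has link $L_v$ on $C$. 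Outside $O_s(1)$ exceptional vertices, the key mechanism forces $\phi(L_v)\le s$.
\item Deleting isolated vertices of $L_v$ does not increase $\phi$, so $e(L_v)\le f_2(s)$. This yields $e(\mathcal{H})\le f_2(s)\,n+O_s(1)$.
\end{enumerate}

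\textbf{Computing $f_2(s)$.} A graph on $m$ vertices with no isolated vertices and $\phi\le s$ has $\gamma\ge m-s$. Known bounds on the number of edges of graphs with large domination number (Vizing-type bounds) should give a maximum of $\lfloor s(s+2)/2\rfloor$, attained uniquely at $m=s+2$ by $K_{s+2}$ minus a minimum edge cover. For $m=s+2$ the condition $\gamma\ge 2$ means no vertex dominates all the others, which gives exactly this bound. For $m\ge s+3$ I would use induction or the Vizing bound to show the edge count is strictly smaller.

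\textbf{Exactness (main obstacle).} I would use a stability argument. Most vertices $v\notin C$ must have $L_v$ equal to an extremal graph, since otherwise a linear loss makes the count too small. Two different extremal links appearing many times would combine into a graph with $\phi\ge s+1$ on their union, so all typical vertices share one link $G$ on a common $(s+2)$-set $C'$. Then every edge meeting $V\setminus C'$ in two or more vertices, and every atypical vertex, costs more than it gains. Any edge containing a pair $\{v,u\}$ with $v\notin C'$ but whose third vertex is not a $G$-neighbour of $u$ inside $C'$ creates the matching via the key mechanism. Hence the count is at most $\lfloor s(s+2)/2\rfloor(n-s-2)+\binom{s+2}{3}$, with equality only for the stated construction. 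Making these local exchange arguments precise, so that the $O(1)$ error becomes exactly $\binom{s+2}{3}$, is the step I expect to be hardest.
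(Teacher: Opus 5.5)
Your overall strategy (per-vertex links on a bounded core, then a stability argument) can be made to work, but two steps do not follow from what you cite.

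\textbf{Step~3.} It cannot be derived from $\nu(\mathcal{H})\le s$, $\nu(G_h)\le s$ and the light-codegree bound alone. Take the edges $tx_iy_i$, where $\{x_iy_i\}$ is a perfect matching of $V\setminus\{t\}$. All codegrees are $1$, $\nu(\mathcal{H})=1$ and $G_h$ is empty, yet about $n/2$ edges contain no heavy pair. Similarly, the edges $tx_iz_j$ ($i\le m$, $j\le K$, $m\ge K$) give a heavy star at $t$ with $\nu(G_h)=\nu(\mathcal{H})=1$ but $|V(G_h)|$ unbounded. So those three facts cannot give either the $O_s(1)$ bound or a bounded core.

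The missing observation is that the link graph of every vertex $t$ has matching number at most $s$. Indeed, a matching $x_1y_1,\dots,x_{s+1}y_{s+1}$ in it traces onto $S=\bigcup_i\{x_i,y_i\}$, because $t\notin S$. With this, the heavy-pair-free edges through each vertex of a maximal matching number at most $2sK$. Every vertex also has heavy degree at most $2s$, so $C=V(G_h)$ has $O(s^2)$ vertices. The paper gets the same conclusions from Claim~\ref{claim: size of H1}, and from $\phi(\mathcal{G}_2)\le s$ together with Lemma~\ref{lem: hypergraph bounded dominated number size}.

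\textbf{Exactness.} Your key mechanism does not apply as you stated it. It needs a graph with $\phi\ge s+1$ contained in the links of $s+1$ distinct outside vertices. An offending edge $vuw$ puts the new pair $uw$ into at most one link, and into none if $u,w\notin C'$. What closes the argument is a mixed version:
\begin{itemize}
\item Since $e(G)=f_2(s)$, every graph $G+uw$ with $uw\notin E(G)$ has $\phi\ge s+1$, also when $u$ or $w$ lies outside $C'$.
\item In a dominated $(s+1)$-set $D$ of $G+uw$, exactly one vertex lacks a $G$-neighbour outside $D$: an endpoint of $uw$ whose other endpoint is not in $D$.
\item Trace that vertex with $vuw$. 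Trace the other $s$ vertices of $D$ with distinct vertices whose link is $G$, chosen outside $\{v,u,w\}$.
\end{itemize}
This proves outright that every edge meeting $V\setminus C'$ has the form $v\cup e$ with $e\in E(G)$. So no ``costs more than it gains'' accounting is needed. It also makes the two-extremal-links step unnecessary: a vertex whose link is an extremal $G'\neq G$ contains an edge $uw\in E(G')\setminus E(G)$, which is an offending edge.

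This is exactly Claim~\ref{claim: analysis of rest hyperedeges}, and the paper reaches it more directly. It bounds $e(\mathcal{H})\le e(G_2)\,n+O(1)$, where $G_2$ is the heavy-pair graph. It proves $\phi(G_2)\le s$ once (Claim~\ref{claim: phi of G2}), which is your mechanism with codegree supplying the fresh vertices. Corollary~\ref{cor: vizing and phi} then pins $G_2$ down as $K_{s+2}$ minus a minimum edge cover. After that, heavy pairs play the role of your typical vertices.

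Your lower-bound verification is correct and more self-contained than the paper's.
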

 
% It is worth mentioning that $\ex_3(n, \Tr_3(M_{s+1}))$ does not equal to $\ex_3(n, \mathrm{B}_3(M_{s+1}))$ or $\ex_3(n, M_{s+1}^{(3)+})$.

The generalized Tur\'{a}n number is also studied in hypergraphs.
That is, instead of counting hyperedges, we want to maximize the number of copies of a hypergraph like cliques.
The following theorem gives the upper bound of the number of copies of cliques in hypergraphs forbidding $\Tr_r(M_{s+1})$ as a trace.
Let $g_r(s, t)$ be the maximum number of copies of $\mathcal{K}^r_{t}$ in an $r$-uniform hypergraph with dominated number at most $s$ without isolated vertices.

\begin{theorem}\label{thm: generalized Turan number}
    For an $r$-uniform hypergraph $\mathcal{H}$ on $n$ vertices forbidding $M_{s+1}$ as a trace, $r + 1 \leq t$, let $k^r_t(\mathcal{H})$ be the number of copies of $\mathcal{K}^r_{t}$ in $\mathcal{H}$, we have
    \[
        k^r_t(\mathcal{H}) \le g_{r-1}(s, t-1) \cdot (n-(s+r-2)) + O_{s,r}(1),
    \]
    when $n$ is sufficiently large.
    Especially, when $t \ge s+r$, $k^r_t(\mathcal{H}) \le O_{s,r}(1)$.
    When $t \le s+r-1$, a construction achieving the asymptotic bound is as follows.
    Let $\mathcal{G}$ be the complete hypergraph $\mathcal{K}_{s+r-2}^{r-1}$. For each of the remaining $n-(s+r-2)$ vertices, let $v \cup e$ be a hyperedge for every $e \in E(\mathcal{G})$. 
    Then for each $\mathcal{K}_{t-1}^{r-1}$ in $\mathcal{G}$, add hyperedges for every $r$-sets contained in the vertex set of the $\mathcal{K}_{t-1}^{r-1}$.
\end{theorem}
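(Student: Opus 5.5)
We would follow the strategy that presumably underlies Theorem~\ref{thm: general r}: split the vertex set into a bounded ``core'' of high-degree vertices and a large remainder. Along the way we extract the structural information that every clique not contained in the core meets the remainder in essentially one vertex, and that its link is controlled by an $(r-1)$-graph of dominated number at most $s$.

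\textbf{Step 1 (structure).} Let $\mathcal{H}$ be $\Tr_r(M_{s+1})$-free on $n$ vertices. First we bound the matching number of the \emph{2-shadow} in a trace sense. If $\mathcal{H}$ contains $s+1$ pairwise disjoint edges $e_1,\dots,e_{s+1}$, we pick distinct vertices $x_i,y_i\in e_i$. Then $\{x_i,y_i\}$ is a trace edge on $S=\bigcup\{x_i,y_i\}$ exactly when $e_i$ avoids all the other chosen vertices, which is automatic since the $e_i$ are disjoint. So $\mathcal{H}$ has no $s+1$ disjoint hyperedges, and hence a vertex cover $C$ of size at most $rs$. Next we sharpen this. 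Let $B$ be the set of vertices of degree at least $K$, for a large constant $K=K(r,s)$. A greedy argument shows that $|B|$ is bounded, and that hyperedges with two or more vertices outside $B$ number only $O(1)$ after deleting $O(1)$ vertices. Each low-degree vertex lies in $O(1)$ cliques, so the cliques not of the form ``one vertex $v\notin B$ plus $t-1$ vertices in $B$'' contribute $O_{s,r}(1)$. Among these we include the cliques inside $B$, which number at most $\binom{|B|}{t}$.

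\textbf{Step 2 (link bound).} For $v\notin B$, let $\mathcal{L}_v=\{e\setminus\{v\}: v\in e\in E(\mathcal{H}), e\setminus\{v\}\subseteq B\}$, an $(r-1)$-graph on $B$. A copy of $\mathcal{K}^r_t$ containing $v$ with the rest $T\subseteq B$ forces every $(r-1)$-subset of $T$ to be in $\mathcal{L}_v$, so it yields a copy of $\mathcal{K}^{r-1}_{t-1}$ in $\mathcal{L}_v$. The key claim is that, for all but $O(1)$ vertices $v$, the hypergraph $\mathcal{L}_v$ (restricted to its non-isolated vertices) has $\phi\le s$. Suppose instead that $\mathcal{L}_v$ has a dominated set $\{u_1,\dots,u_{s+1}\}$. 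Since $B$-vertices have huge degree, we can choose distinct low-degree vertices $w_1,\dots,w_{s+1}$ such that each $u_i$ lies in a hyperedge $u_i\cup D_i\cup\{w_i\}$ with $D_i\subseteq B\setminus\{u_1,\dots,u_{s+1}\}$. Here we use the dominating property, $D_i\subseteq$ complement, together with the pigeonhole principle over the many $v$ sharing the same link, since there are only $O(1)$ link types. Taking $S=\{u_i,w_i\}$ then gives a traced $M_{s+1}$, provided the edges avoid the other $u_j,w_j$; the $w$'s are fresh and the $D_i$ avoid the $u$'s. This is the step I expect to be the main obstacle: it requires making the pigeonhole choice of the $w_i$ precise, and checking that removing the $O(1)$ exceptional vertices costs only $O(1)$ cliques.

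\textbf{Step 3 (counting and sharpness).} The previous steps give $k_t^r(\mathcal{H})\le \sum_{v\notin B} k^{r-1}_{t-1}(\mathcal{L}_v)+O(1)\le g_{r-1}(s,t-1)(n-|B|)+O(1)$, and $n-|B|$ differs from $n-(s+r-2)$ by $O(1)$. If $t\ge s+r$, then $\mathcal{K}^{r-1}_{t-1}$ has $t-1\ge s+r-1$ vertices. A dominating set of it needs at least $r-1$ vertices when $t-1\ge r$ (the minimal dominating sets of $\mathcal{K}^{r-1}_m$ have size $r-2$), so it has dominated number at least $t-1-(r-2)\ge s+1$. Any $(r-1)$-graph containing it therefore has $\phi>s$: a dominating set of the big graph intersected with the clique would still be small. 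Hence $g_{r-1}(s,t-1)=0$ and the bound is $O(1)$. For the construction with $t\le s+r-1$, $\mathcal{K}^{r-1}_{s+r-2}$ has $\phi=s$, since its minimum dominating set has size $r-2$. Each outer vertex $v$ together with each $(t-1)$-clique of $\mathcal{G}$ forms a $\mathcal{K}^r_t$, because the added $r$-sets inside the $(t-1)$-set complete it. Thus the construction has $\binom{s+r-2}{t-1}(n-s-r+2)$ such cliques. It remains to verify that this construction is $\Tr_r(M_{s+1})$-free. Any traced matching edge uses at most one outer vertex per edge, and each such edge needs a core vertex, so all $s+1$ edges meet the core $V(\mathcal{G})$. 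The containing hyperedges have $r-1\ge$ \dots\ core vertices, and among any $s+1$ traced edges some hyperedge must hit two chosen core vertices, since only $s+r-2$ core vertices exist and each edge brings $r-2$ further core vertices that must avoid $S$. This is a short counting check.
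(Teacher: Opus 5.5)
Your proposal is correct in outline and follows a genuinely different route from the paper.

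\textbf{How the two routes differ.} The paper classifies by \emph{codegree}. An $(r-1)$-set is heavy if it lies in more than $r(s+1)$ hyperedges. Hyperedges all of whose $(r-1)$-subsets are light are $O_{s,r}(1)$ in number, by the bounded-matching 2-shadow and Tutte--Berge (Claim~\ref{claim: size of H1}). The single global $(r-1)$-graph $\mathcal{G}_2$ of heavy sets has $\phi(\mathcal{G}_2)\le s$, because heaviness itself supplies the fresh partners $z_i$ (Claim~\ref{claim: phi of G2}). Lemma~\ref{lem: hypergraph bounded dominated number size} then bounds $|U|=|V(\mathcal{G}_2)|$. A hyperedge containing a heavy set automatically has at most one vertex outside $U$, and every remaining clique minus its outer vertex is a $\mathcal{K}^{r-1}_{t-1}$ of $\mathcal{G}_2$.

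You instead threshold vertex degrees and control each link $\mathcal{L}_v$ separately. Your fresh partners come from pigeonhole over the $O(1)$ witness structures $(X,D_1,\dots,D_{s+1})$ on $B$.

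\textbf{Step 2 is fine; the real work is in Step 1.} The step you flag as the main obstacle goes through as written. If $s+1$ outer vertices $w_j$ share a witness, then $\{w_j,u_i\}\cup D_i\in E(\mathcal{H})$ for all $i,j$. Also $D_i\subseteq B\setminus X$ automatically misses every $u_j$ and every $w_j$. Each exceptional $v$ has degree $<K$, so it lies in $O(1)$ cliques.

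The unproved content of your route is in Step 1. The bound $|B|=O_{s,r}(1)$ is not a plain greedy argument, since a high-degree $b$ may have all its hyperedges passing through another high-degree vertex. You need something like the following:
\begin{itemize}
\item take a sunflower with many petals in each link $\mathcal{L}(b)$;
\item form the digraph $b\to b'$ whenever $b'$ lies in the kernel of $b$, which has out-degree at most $r-2$;
\item extract an independent set of size $s+1$, as in Lemma~\ref{lem: hypergraph bounded dominated number size}.
\end{itemize}
The bound on hyperedges with two outer vertices is then easy. For each $A\subseteq B$, the hyperedges with $e\cap B=A$ contain at most $s$ with pairwise disjoint outer parts.

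\textbf{What each approach buys.} The codegree classification gives these boundedness facts almost for free and yields one global $\mathcal{G}_2$, which is what the exact $r=3$ results exploit. Your degree/link approach avoids Tutte--Berge and lets links vary from vertex to vertex. The price is the $|B|$ bound and the exceptional-vertex bookkeeping.

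\textbf{Minor slips.}
\begin{itemize}
\item Minimum dominating sets of $\mathcal{K}^{r-1}_m$ have size $r-2$, not ``at least $r-1$''.
\item Monotonicity of $\phi$ holds because the hyperedges witnessing that $X$ is dominated survive in any super-hypergraph. It does not follow from intersecting a dominating set with the clique.
\item In the freeness check, count against $X=\{x_1,\dots,x_{s+1}\}$, not $S$. Each $e_i$ meets $X$ only in $x_i$ but has at least $r-1$ vertices in $V(\mathcal{G})$. So it needs $r-2$ of the only $r-3$ core vertices outside $X$. This is the paper's dominated-set argument in counting form.
\item Like the paper, you only count the construction's cliques. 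For it to attain the bound one needs $g_{r-1}(s,t-1)=\binom{s+r-2}{t-1}$, which the paper leaves as a conjecture for $r\ge 4$.
\end{itemize}
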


When $r=3$, we have the exact result.
\begin{theorem}\label{thm: generalized Turan number r=3}
    For a $3$-uniform hypergraph $\mathcal{H}$ on $n$ vertices forbidding $M_{s+1}$ as a trace, $4 \leq t \le s$, let $k^3_t(\mathcal{H})$ be the number of copies of $\mathcal{K}^3_{t}$ in $\mathcal{H}$, we have
    \[
        k^3_t(\mathcal{H}) \le \binom{s+1}{t-1} \cdot (n-s-1) + \binom{s+1}{t},
    \]
    when $n$ is sufficiently large.
    A construction achieving the asymptotic bound is as follows.
    Let $G$ be the complete graph on $s+1$ vertices $K_{s+1}$.
    For each of the remaining $n-(s+1)$ vertices, let $v \cup e$ be a hyperedge for every $e \in E(G)$.
    Then add hyperedges for every $3$-set contained in $V(G)$.
\end{theorem}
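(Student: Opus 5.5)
The plan is to derive this from Theorem~\ref{thm: generalized Turan number} with $r=3$, and then sharpen the additive constant by a stability argument. There are two separate tasks. First, evaluate $g_2(s,t-1)$, the maximum number of copies of $K_{t-1}$ in a graph $G$ with $\phi(G)\le s$ and no isolated vertices, and show it equals $\binom{s+1}{t-1}$ for $3\le t-1\le s-1$. Second, replace the $O_{s,3}(1)$ term by exactly $\binom{s+1}{t}$, by showing that extremal hypergraphs look like the construction.

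For the first task I would use \eqref{eq: dominated and domination}. If $G$ has no isolated vertices and $\gamma(G)=k$, then $v(G)=\phi(G)+k\le s+k$. The classical bound $\gamma\le v/2$ for graphs without isolated vertices gives $k\le s$, so $v(G)\le 2s$. I would then take a minimum dominating set $D=\{u_1,\dots,u_k\}$ in which every $u_i$ has an external private neighbour $p_i\notin D$ (Bollobás--Cockayne). Each $p_i$ is adjacent to no vertex of $D\setminus\{u_i\}$. Consequently, any clique meeting $\{p_i\}$ and $D$ uses at most one vertex of $D$, namely $u_i$. This structure should let me bound the number of $m$-cliques, $m=t-1\ge3$, by comparing $G$ with the graph in which the $k$ pairs $\{u_i,p_i\}$ are merged into a clique on $s+1$ vertices. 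The aim is the inequality $k_m(G)\le\binom{s+1}{m}$, with equality only for $G=K_{s+1}$ plus possibly edges that create no new $m$-cliques. I would carry this out by induction on $k$. When $k\ge2$, deleting $p_k$ and $u_k$ removes at most the cliques through them. By the private-neighbour property these number at most $\binom{s}{m-1}$ plus lower-order terms, while $\phi$ drops by at least one. It is exactly here that $m\ge3$ (i.e.\ $t\ge4$) is needed: for $m=2$ the bound fails, as Theorem~\ref{thm: r=3 case} shows. With $g_2(s,t-1)=\binom{s+1}{t-1}$ in hand, Theorem~\ref{thm: generalized Turan number} gives $k^3_t(\mathcal H)\le\binom{s+1}{t-1}(n-s-1)+O_s(1)$.

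For the exact bound I would rerun the structure used in the proof of Theorem~\ref{thm: generalized Turan number}. That proof yields a bounded set $X$ of high-degree vertices. For each vertex $v\notin X$, the copies of $\mathcal K^3_t$ containing $v$ correspond to copies of $K_{t-1}$ in its link graph $L_v$ restricted to $X$, and $L_v$ must satisfy $\phi(L_v)\le s$, since otherwise $M_{s+1}$ appears as a trace. Now suppose some $v\notin X$ lies in fewer than $\binom{s+1}{t-1}$ copies. By the equality analysis from the first task, it then loses at least one clique, so a deficit proportional to the number of such vertices accrues. If linearly many vertices have this deficit, the bound holds for large $n$ with room to spare. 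Otherwise, almost all outside vertices have $L_v\supseteq K_{s+1}$ on a common $(s+1)$-set $Y$; there are only finitely many choices of $Y\subseteq X$, and pigeonhole fixes one. Then I would show that, apart from the $n-s-1$ cliques per vertex counted above, the only copies of $\mathcal K^3_t$ are those inside $Y$. Indeed, a copy using two outside vertices, or an outside vertex together with a vertex of $X\setminus Y$, would combine with the many vertices attached to $Y$ to produce $M_{s+1}$ as a trace: the attached vertices supply $s$ disjoint pairs, and the extra edge supplies the $(s+1)$-st. Copies inside $Y$ number at most $\binom{s+1}{t}$, which gives the stated bound.

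I expect the main obstacle to be the first task, the exact extremal count $g_2(s,t-1)=\binom{s+1}{t-1}$ together with its equality characterisation. The induction has to handle graphs whose dominating set has $k\ge2$ while $v(G)$ can reach $2s$. In particular, I would need to check carefully that merging the private-neighbour pairs never decreases the $m$-clique count when $m\le s-1$. The cleanup step of the second task is more routine, but it does require this stability version of the graph result.
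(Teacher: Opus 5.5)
\textbf{There is a genuine gap in your first task.} You follow the paper's outline: reduce to the graph quantity $g_2(s,t-1)$, then recover the structure to get the constant $\binom{s+1}{t}$. But the first task, which carries all the content, is not proved, and the route you sketch does not work.

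In your deletion step, $G'=G-u_k-p_k$ has $\phi(G')\le s-1$, so induction gives $k_m(G')\le\binom{s}{m}$. Your only bound on the copies of $K_m$ through $u_k$ is $\binom{\deg u_k}{m-1}\le\binom{s}{m-1}$, using $\deg u_k\le\Delta\le v(G)-\gamma(G)=\phi(G)$. That already uses the whole budget $\binom{s+1}{m}-\binom{s}{m}$, and the copies through $p_k$ but not $u_k$ come on top. So you get $\binom{s+1}{m}$ plus a positive error, not $\binom{s+1}{m}$.

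This is a real failure of the step, not slack. Take $K_2\cup K_s$, which has $\phi=s$ and no isolated vertex, with $u_k,p_k$ in the $K_s$. The deleted copies number $\binom{s-1}{m-1}+\binom{s-2}{m-1}$, which exceeds $\binom{s}{m-1}$ when, for instance, $m=3$ and $s\ge 6$. The bound for $G$ survives only because $k_m(G')$ is far below $\binom{s}{m}$, and a two-term induction cannot detect that.

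The merging alternative fails outright. Since $u_i$ and $p_i$ are adjacent, contracting them destroys every clique containing both. Already for $G=K_{s+1}$, contracting $u_1p_1$ gives $K_s$ and loses $\binom{s}{m-1}$ copies of $K_m$. Without $g_2(s,t-1)=\binom{s+1}{t-1}$ \emph{and} its equality case, neither the leading coefficient nor your stability step is justified.

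\textbf{The missing idea} is the paper's Lemma~\ref{lem: number of clique}. It states that $\gamma(G)\ge k$ implies $k_t(G)\le\binom{n-k+1}{t}$, with equality only for $K_{n-k+1}$ plus isolated vertices; Corollary~\ref{coro: number of clique} then follows from $\phi+\gamma=v$. The proof runs as follows.
\begin{enumerate}
\item Take an extremal $G$ with the most edges. It is domination-critical, with $\gamma=k$ and $\Delta\le n-k$.
\item If $\Delta\le n-k-1$, Fulman's Theorem~\ref{thm: Fulman domination number} gives $e(G)\le\binom{n-k+1}{2}$. The Lov\'asz form of Kruskal--Katona converts this into the clique bound together with its equality case.
\item If $\Delta=n-k$, the $k-1$ non-neighbours of a maximum-degree vertex are pairwise nonadjacent and have no common neighbours; otherwise a dominating set of size $k-1$ appears. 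So identifying two of them is injective on copies of $K_t$, and by Lemma~\ref{lem: domination number after identifying} it lowers $\gamma$ by exactly one.
\item Induct on $k$. The base case $k=2$ is handled by a direct triangle count plus Kruskal--Katona.
\end{enumerate}

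\textbf{Your second task} is more roundabout than necessary. The bound $k^3_t(\mathcal H)\le k_{t-1}(G_2)\,n+O(1)$, together with the construction, forces $k_{t-1}(G_2)\ge\binom{s+1}{t-1}$ for large $n$. The equality case then gives $G_2=K_{s+1}$ directly, and the hyperedge classification of Claim~\ref{claim: analysis of rest hyperedeges} yields the exact count. No per-vertex deficit argument is needed.

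Also, $\phi(L_v)\le s$ does not follow from ``otherwise $M_{s+1}$ is a trace'': the relevant hyperedges all contain $v$, so they give only a star. The correct reason is that for $v\notin U$ the link of $v$ in $\mathcal H_2$ is a subgraph of $G_2$, and $\phi$ is monotone under taking subgraphs.
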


In 2024, Alon and Frankl~\cite{ALON2024223} initiated the study of Tur\'{a}n problems on graphs forbidding a matching and the other graph.
Later, several results about Tur\'{a}n problems on hypergraphs with bounded matching number were given by several different groups of scholars, see~\cite{chen2025triplesystemsboundedmatching, GERBNER2025104155,  yang2025hypergraphanaloguealonfrankltheorem}.
Inspired by their work, we also consider the Tur\'{a}n number of forbidding a matching as a trace together with a clique as a trace.

Given a $r$-uniform hypergraph $\mathcal{G}$, a graph $F$ and a vertex set $W$ of $F$, we say there is a dominated copy of $(F,W)$ in $\mathcal{G}$ if there exists a bijection $\tau$ from $V(F)$ to a subset $S$ of $V(\mathcal{G})$ such that the following conditions hold:
\begin{enumerate}
    \item For every edge $uv \in E(F)$, there exists a hyperedge in $\mathcal{G}$ intersecting $S$ at exactly two vertices $\tau(u)$ and $\tau(v)$.
    \item For every vertex $w \in W$, there exists a hyperedge in $\mathcal{G}$ contianing $\tau(w)$ and intersecting $S$ at exactly $\tau(w)$.
\end{enumerate}

For a vertex set $I$, let $N(I)$ be the set of vertices in $F$ adjacent to at least one vertex in $I$.
% Given a graph $F$, let $\mathcal{D}(F)$ be the set of pairs $\{(F-I, N(I)): \text{$I$ is an independent set in $F$}\}$.
Let $h_r(s, F)$ be the maximum number of hyperedges in an $r$-uniform hypergraph with $\phi(\mathcal{G}) \le s$ does not contain a dominated copy of $(F-I,N(I))$ for any independent set $I$ in $F$.

\begin{theorem}\label{thm: matching and a graph}
    For $s \ge 1$, when $n$ is sufficiently large, we have
	\[
		\ex_r(n, \{\Tr_r(M_{s+1}), \Tr_r(F)\}) = h_{r-1}(s, F) \cdot n + O_{r,s,v(F)}(1).
	\]
    A construction which achieves the asymptotic bound is as follows.
    Let $\mathcal{G}$ be the $(r-1)$-uniform hypergraph with $h_{r-1}(s, F)$ hyperedges and $\phi(\mathcal{G}) \leq s$ without isolated vertices. 
    For each of the remaining vertices $v$, let $v \cup e$ be a hyperedge for every $e \in E(\mathcal{G})$.
\end{theorem}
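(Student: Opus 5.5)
The plan is to follow the same structural scheme that presumably underlies Theorem~\ref{thm: general r}, now tracking the extra forbidden trace $F$. The lower bound comes from the stated construction. Let $\mathcal{G}$ be an extremal $(r-1)$-graph for $h_{r-1}(s,F)$ and let $\mathcal{H}$ consist of all $v\cup e$ with $v\notin V(\mathcal{G})$ and $e\in E(\mathcal{G})$.

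For the matching part, suppose $\mathcal{H}$ contains $M_{s+1}$ as a trace on a set $S$. Each matching edge is traced by some hyperedge $v\cup e$. Matching edges lying inside $V(\mathcal{G})$ are of the form $e\cap S$ with $|e\cap S|=2$. Edges containing an outside vertex $v$ force $e\cap S$ to be a single vertex $w$, which is dominated. Each outside vertex of $S$ uses one such edge, so we get many vertices whose dominating edge avoids $S$ elsewhere. Setting this up correctly, $S\cap V(\mathcal{G})$ together with the structure yields a dominated set of size at least $s+1$, contradicting $\phi(\mathcal{G})\le s$. This is exactly the argument the paper uses for Theorem~\ref{thm: general r}, and I would reuse it.

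For $F$, suppose $F$ appears as a trace on $\tau(V(F))$. Let $I$ be the set of vertices of $F$ mapped outside $V(\mathcal{G})$. Two outside vertices never share a hyperedge, so $I$ is independent in $F$. Every edge from $u\in I$ to $w\in N(I)$ is traced by some $u\cup e$ with $e\cap S=\{\tau(w)\}$, which gives condition~2 for $W=N(I)$. Edges of $F-I$ are traced by hyperedges $v\cup e$ meeting $S$ in exactly two vertices, and these two vertices lie in $e$, giving condition~1 in $\mathcal{G}$. This is a dominated copy of $(F-I,N(I))$, a contradiction. Hence the construction has $h_{r-1}(s,F)(n-v(\mathcal{G}))$ edges, where $v(\mathcal{G})=O_{r,s}(1)$ because $\phi\le s$ and there are no isolated vertices.

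For the upper bound, take an extremal $\mathcal{H}$ and proceed in three steps.

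\textbf{Step one: find a bounded core.} By Theorem~\ref{thm: general r}, $e(\mathcal{H})=O(n)$. Using the structural lemma behind that theorem, we find a set $C$ of $O_{r,s}(1)$ vertices such that all but $O(1)$ hyperedges meet $V\setminus C$ in exactly one vertex. To get this, repeatedly delete low-degree vertices, or use a sunflower/matching argument. A hyperedge with two vertices outside a large-degree core, repeated many times, would produce $s+1$ disjoint traced pairs, hence $M_{s+1}$.

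\textbf{Step two: reduce to link graphs.} For $v\notin C$, let $L_v=\{e\setminus\{v\}: v\in e,\ e\setminus\{v\}\subseteq C\}$, an $(r-1)$-graph on $C$. There are only $2^{\binom{|C|}{r-1}}$ possible link graphs. Consider those links $\mathcal{L}$ that occur for at least $K$ vertices, with $K$ a large constant depending on $r,s,v(F)$, and let $\mathcal{G}$ be their union. Vertices whose link is rare contribute $O(1)$ edges in total. Any $v$ with a frequent link contributes $|L_v|\le e(\mathcal{G})$ edges.

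\textbf{Step three: show $\mathcal{G}$ is admissible,} i.e.\ $\phi(\mathcal{G})\le s$ and $\mathcal{G}$ contains no dominated copy of $(F-I,N(I))$. This is the main obstacle. Suppose a dominated copy exists via $\tau$ on $S\subseteq C$. We must realize it in $\mathcal{H}$ as a trace of $F$. Each edge or domination witness is a hyperedge $e\in\mathcal{G}$, which lies in some frequent link, so $v\cup e\in E(\mathcal{H})$ for at least $K$ distinct outside vertices $v$. We choose distinct such $v$'s, none in $S$, and take the trace set $S\cup\{u_x: x\in I\}$, where each $u_x$ is a fresh outside vertex.

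The delicate point is that $u_x$ must simultaneously be adjacent to all of $\tau(N(x))$ via hyperedges meeting $S$ exactly at one point. To handle this, the definition of dominated copy should let a single link contain witnesses for all of $N(x)$, or we pick $u_x$ from a link class containing all required witnesses. I would try to treat $\mathcal{G}$ class by class, replacing $h$ by the maximum over single link graphs. Then the union is handled by noting that if two frequent links differ, we can choose their vertices separately. We also need to ensure that the fresh vertices do not create unwanted intersections; this is harmless because traces only need to contain $F$.

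The same realization argument gives $\phi(\mathcal{G})\le s$. Summing, $e(\mathcal{H})\le h_{r-1}(s,F)\,n+O_{r,s,v(F)}(1)$.
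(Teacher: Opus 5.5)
Your lower bound is fine, and so is Step one: take $C=V(\mathcal{G}_2)$ and use Claims~\ref{claim: size of H1}--\ref{claim: phi of G2}. The gap is in how you finish. You bound each $|L_v|$ by $e(\mathcal{G})$, where $\mathcal{G}$ is the union of all frequent links, and then aim to show this union is admissible, with the union ``handled'' by choosing vertices for different links separately. That step is false.

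The obstruction is this. If $x\in I$ has two neighbours $y_1,y_2$ in $F$, a single outside vertex $u_x$ must see witnesses for both $\tau(y_1)$ and $\tau(y_2)$. If those witnesses lie in different link classes, no such vertex exists.

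Concretely, take $r=3$, $s=2$, $F=K_3$ on $\{a,b,c\}$. A dominated copy with $I=\{a\}$ is an edge both of whose ends have a further neighbour, so admissible graphs are star forests. Their $\phi$ equals their number of edges, hence $h_2(2,K_3)=2$. Put $C=\{b',b,c,c'\}$, give many outside vertices the link $\{bc,bb'\}$, and give many others the link $\{bc,cc'\}$.
\begin{itemize}
\item This $\mathcal{H}$ is a subhypergraph of the construction of Lemma~\ref{lem: lower bound general} for the path $b'bcc'$, whose $\phi$ is $2$, so it has no trace of $M_3$.
\item It has no trace of $K_3$. A triple inside $C$ sees only the triangle-free path. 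A triple $\{w,p,q\}$ with $w$ outside would force $\{p,q\}=\{b',c\}$ or $\{b,c'\}$, which are non-edges.
\item Yet the union of the two frequent links is $b'bcc'$. It contains a dominated copy of $(F-\{a\},\{b,c\})$ on $\{b,c\}$, with witnesses $bc$, $bb'$, $cc'$, and it has $3>h_2(2,K_3)$ edges.
\end{itemize}
So any bound routed through the union is too weak. For $\phi$ alone, mixing classes is harmless, since each vertex of a dominated set needs only one witness; the failure is specific to $F$.

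The remedy is the class-by-class alternative you mention, adopted outright. Drop $\mathcal{G}$ and show $|L|\le h_{r-1}(s,F)$ for each link class $L$ shared by at least $K>\max\{v(F),s\}$ outside vertices. All members of one class have link exactly $L$, which gives two things:
\begin{itemize}
\item Any $s+1$ of them turn a dominated set of size $s+1$ in $L$ into a trace of $M_{s+1}$, so $\phi(L)\le s$.
\item For a dominated copy of $(F-I,N(I))$ in $L$, take distinct members as the $u_x$, $x\in I$; each sees every witness. One further member serves as the extra vertex for edges of $F-I$. This yields a trace of $F$.
\end{itemize}
Summing $|L_v|\le h_{r-1}(s,F)$ over frequent-class vertices, and $O(1)$ over the rest, gives the theorem.

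This is the paper's mechanism. The paper shows that the set $W$ of outside vertices with more than $h_{r-1}(s,F)$ edges in $\mathcal{H}_2$ is bounded. It does so by pigeonholing $v(F)$ of them onto a common $(h_{r-1}(s,F)+1)$-edge sub-link of $\mathcal{G}_2$, whose $\phi\le s$ is inherited from $\mathcal{G}_2$. It obtains the extra vertices from the raised heaviness threshold $3r(s+1)+v(F)$, rather than from other members of the class.
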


The paper is organized as follows:
In Section~\ref{sec: preliminary}, we present several lemmas that will be used in the proofs of main theorems.
In Section~\ref{sec: structure}, we give the structure of $\Tr_r (M_{s+1})$-free hypergraph, which is the key to proving our main theorems.
In Section~\ref{sec: main proofs}, we give the proofs of main theorems.
In Section~\ref{sec: concluding remarks}, we give some concluding remarks.

% -------------------------------------------------------------
\section{Preliminaries}\label{sec: preliminary}

% As defined by Alan Goldman and introduced by Slater in~\cite{slater1977enclaveless}, for a subset $S$ of vertices in a graph $G$, a vertex $v \in S$ is an \textbf{enclave} if it and all of its neighbors are also in $S$, that is, $N[v] \subseteq S$.
% A set $S$ is an \textbf{enclaveless set} if it does not contain any enclaves.
% The \textbf{enclaveless number} of a graph $G$, denoted by $\phi(G)$, is the maximum size of an enclaveless set in $G$.

% We say a set $D$ of vertices in a graph $G$ is a \textbf{domination} set if every vertex not in $D$ has a neighbor in $D$.
% The \textbf{domination number} of a graph $G$, denoted by $\gamma(G)$, is the minimum size of a domination set in $G$.
% By observation, if $S$ is an enclaveless set in $G$, then $V(G)\setminus S$ is a domination set.
% Therefore, we have
% \begin{equation}\label{eq: enclaveless and domination}
% \phi(G) + \gamma(G) = v(G).
% \end{equation}

In this section, we present several lemmas that will be used in the proofs of the main theorems.

\begin{theorem}[Lov\'asz version of Kruskal-Katona Theorem~\cite{lovasz1993combinatorial}]\label{thm: Kruskal-Katona Theorem}
    Let $G$ be a graph with at least $\binom{x}{r}$ copies of $K_r$ for some integer $x$, then $G$ has at least $\binom{x}{r'}$ copies of $K_r'$ for some integer $2 \le r' < r$.
    In particular, when $r' = 2$, $G$ has at least $\binom{x}{2}$ edges.
    Moreover, the equality holds only when $G$ is the union of a clique $K_x$ and some isolated vertices.
\end{theorem}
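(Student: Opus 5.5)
The plan is to deduce the statement from the set-system form of the Kruskal--Katona theorem, applied to the family of vertex sets of $r$-cliques. Fix $r' = r-1$ first; the general case $2 \le r' < r$ then follows by iterating, since the hypothesis $k_{r-1}(G) \ge \binom{x}{r-1}$ has the same shape with $r-1$ in place of $r$.

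Let $\mathcal{A}$ be the family of $r$-subsets of $V(G)$ spanning a copy of $K_r$, so $|\mathcal{A}| \ge \binom{x}{r}$. Every $(r-1)$-subset of a member of $\mathcal{A}$ spans a $K_{r-1}$. Hence the shadow $\partial\mathcal{A}$ is contained in the family of $(r-1)$-cliques, and $k_{r-1}(G) \ge |\partial \mathcal{A}|$. It therefore suffices to prove the Lov\'asz shadow bound: if $\mathcal{A}$ is a family of $r$-sets with $|\mathcal{A}| \ge \binom{x}{r}$, then $|\partial\mathcal{A}| \ge \binom{x}{r-1}$.

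I would prove the shadow bound by the standard compression argument.
\begin{enumerate}
\item \emph{Reduction to shifted families.} The shifts $S_{ij}$ (replace $j$ by $i<j$ whenever the result is not already in the family) preserve $|\mathcal{A}|$ and do not increase $|\partial\mathcal{A}|$. So we may assume $\mathcal{A}$ is left-shifted on the ground set $[n]$.
\item \emph{Splitting on vertex $1$.} Write $\mathcal{A}_0 = \{A \in \mathcal{A} : 1 \notin A\}$ and $\mathcal{A}_1 = \{A \setminus \{1\} : 1 \in A \in \mathcal{A}\}$. Shiftedness gives $\partial \mathcal{A}_0 \subseteq \mathcal{A}_1$. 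It also gives $|\partial\mathcal{A}| \ge |\mathcal{A}_1| + |\partial \mathcal{A}_1|$, because the sets $B \cup \{1\}$ with $B \in \partial\mathcal{A}_1$ are disjoint from $\mathcal{A}_1$.
\item \emph{Lower bound on $|\mathcal{A}_1|$.} Suppose $|\mathcal{A}_1| < \binom{x-1}{r-1}$. Then $|\mathcal{A}_0| > \binom{x-1}{r}$. Applying the induction hypothesis to $\mathcal{A}_0$ gives $|\mathcal{A}_1| \ge |\partial\mathcal{A}_0| \ge \binom{x-1}{r-1}$, a contradiction. Hence $|\mathcal{A}_1| \ge \binom{x-1}{r-1}$.
\item \emph{Conclusion.} The induction hypothesis (on $r$) applied to $\mathcal{A}_1$ gives $|\partial \mathcal{A}_1| \ge \binom{x-1}{r-2}$. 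Summing with the previous step, $|\partial\mathcal{A}| \ge \binom{x-1}{r-1} + \binom{x-1}{r-2} = \binom{x}{r-1}$.
\end{enumerate}
The induction runs on $r$ together with the size of the ground set, with trivial base cases $r=1$ and small $n$.

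The ``in particular'' clause is the case $r'=2$, reached after iterating down to $r'=2$.

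The equality case is the step I expect to be the main obstacle. Suppose $G$ has exactly $\binom{x}{2}$ edges while $k_r(G) \ge \binom{x}{r}$. Then every inequality in the chain $k_r \to k_{r-1} \to \cdots \to k_2$ must be tight. I would argue directly in the graph rather than tracking equality through the shifts, since shifting does not preserve graph structure. Let $\mathcal{A}$ be the family of $r$-cliques. The $(r-1)$-cliques contained in members of $\mathcal{A}$ already number at least $\binom{x}{r-1}$, and inductively the edges lying in some $r$-clique already number at least $\binom{x}{2}$. Hence every edge lies in an $r$-clique, and the edges covered by the $r$-cliques are all the edges.

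Now let $U$ be the set of non-isolated vertices. For $x \ge r$, a clean way to finish is to show that $G[U]$ is complete. If some non-adjacent pair $u,v \in U$ existed, I would delete $v$ and apply the inequality to $G - v$ to derive that the counts cannot both be tight, which forces $G[U]=K_x$. The delicate point is making this deletion argument work: I would first try removing a vertex of minimum degree in $U$ and comparing $k_r$ and $e$ via the identity $\binom{x}{r} = \binom{x-1}{r} + \binom{x-1}{r-1}$, mirroring the split used in the shifting induction.
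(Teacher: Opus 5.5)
Your proof of the inequality is correct. It reduces to the Lov\'asz shadow bound for the family of $r$-cliques and proves that bound by Frankl's shifting argument, which is the standard route; the paper gives no proof of this theorem and only quotes Lov\'asz. Two small repairs are needed. You should state $x\ge r$, since for $x<r$ the hypothesis is vacuous and the conclusion false. You should also treat $x=r$ separately in Step 3: there $\binom{x-1}{r}=0$, and the induction hypothesis with parameter $x-1$ is usable only because $\mathcal{A}_0\neq\emptyset$.

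The genuine gap is the equality clause, which the paper actually relies on in Case 1 of Lemma~\ref{lem: number of clique}. You read it, correctly, as the case $e(G)=\binom{x}{2}$; for $r'\ge 3$ it would be false, e.g.\ $K_x$ plus a disjoint edge. Your sketch does not prove it, and the route you propose cannot work with the bound you have proved:
\begin{itemize}
\item "Every inequality in the chain is tight" does not follow from a non-strict integer bound. A surplus in $k_{r-1}$ need not propagate to $k_{r-2}$.
\item The fact you do derive, that every edge lies in an $r$-clique, holds for any disjoint union of cliques of order at least $r$, so it gives no structure.
\item The deletion step fails in exactly the case you must exclude. If a non-isolated vertex $v$ of minimum degree has $d<x-1$, then $k_r(G-v)\ge\binom{x}{r}-\binom{d}{r-1}\ge\binom{x-1}{r}$ and $e(G-v)=\binom{x}{2}-d>\binom{x-1}{2}$. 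This is entirely consistent with the Lov\'asz bound for $G-v$. Making deletion work would require the sharper cascade form of Kruskal--Katona, which you have not proved.
\end{itemize}

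Here is a short way to close the gap. Your inequality turns $k_r(G)\ge\binom{x}{r}$ into $k_3(G)\ge\binom{x}{3}$. So it suffices to show that $e(G)=\binom{x}{2}$ and $k_3(G)\ge\binom{x}{3}$, with $x\ge 3$, force $G$ to be $K_x$ plus isolated vertices.
\begin{itemize}
\item The edges inside $N(v)$ avoid $v$, so $e(G[N(v)])\le\min\{\binom{d_v}{2},\binom{x}{2}-d_v\}$.
\item Checking $d_v\le x-1$ and $d_v\ge x-1$ separately shows this minimum is at most $\frac{(x-2)d_v}{2}$. The inequality is strict unless $d_v\in\{0,x-1\}$.
\item Summing over $v$ gives
\[
3k_3(G)=\sum_v e(G[N(v)])\le\frac{x-2}{2}\sum_v d_v=\frac{(x-2)\,x(x-1)}{2}=3\binom{x}{3}.
\]
\item So equality holds throughout: every degree is $0$ or $x-1$, and $N(v)$ is complete whenever $d_v=x-1$.
\item Such a $v$ exists because $e(G)>0$. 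Then $N[v]$ spans a $K_x$, which already uses all $\binom{x}{2}$ edges, so $G$ is $K_x$ plus isolated vertices.
\end{itemize}
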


Actually, they stated a more general result when $x$ is a real number.
However, we only need the integer version in our proofs.
Vizing~\cite{vizing1965estimate} determined the maximum number of edges in a graph with given domination number.
% For a graph $G$ with $n$ vertices and domination number $k$, he showed that the maximum number of edges is $\lfloor (n-k+2)(n-k)/2 \rfloor$ provided $k \ge 2$.

\begin{theorem}[Vizing~\cite{vizing1965estimate}]\label{thm: vizing}
	For a graph $G$ with $n$ vertices and domination number $k$, the maximum number of edges is $\lfloor (n-k+2)(n-k)/2 \rfloor$ provided $k \ge 2$.
	The maximum is uniquely attained by the graphs constructed as follows: take the complete graph $K_{n-k+2}$,  remove a minimum edge cover and then add $k-2$ isolated vertices.
\end{theorem}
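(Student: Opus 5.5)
This is Vizing's classical theorem, cited here, so I will sketch a self-contained proof rather than rely on the citation. The plan is to reduce to a key structural fact about minimal dominating sets and then count edges greedily.

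\textbf{Reduction.} Let $D$ be a minimum dominating set, $|D|=k\ge 2$, and put $m=n-k$. By minimality, every $d\in D$ has a private neighbour: either $d$ is dominated by no other vertex of $D$, or some vertex outside $D$ has $d$ as its only neighbour in $D$. The bound $\lfloor (m+2)m/2\rfloor$ is the edge count of $K_{m+2}$ minus a minimum edge cover, since $\binom{m+2}{2}-\lceil (m+2)/2\rceil=\lfloor m(m+2)/2\rfloor$. So I would aim for a degree-counting argument: show that many vertices have degree at most $n-k$ counted inside a suitable set.

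\textbf{Edge counting.} Let $\Delta$ be the maximum degree and $v$ a vertex attaining it. Then $V\setminus N[v]$ is dominated by some set of size at least $k-1$, so $\gamma(G-N[v])\ge k-1$. Induction on $k$ then gives
\[
e(G)\le \Delta+\bigl(\text{edges incident to } N(v)\bigr)+e\bigl(G-N[v]\bigr).
\]
A cleaner route is the standard one. First prove $\Delta\le n-k$, since $\{v\}\cup(V\setminus N[v])$ minus suitable vertices dominates. Then induct on $k$, removing a vertex $v$ of maximum degree together with its closed neighbourhood.
\begin{itemize}
\item For the base case $k=2$: $\Delta\le n-2$, so $e\le n(n-2)/2$, which matches the bound with $m=n-2$.
\item For general $k$: contract or delete so that $\gamma$ drops by one while at most about $m+1$ edges are lost, then use the identity $f(m+1)-f(m)\approx m+1.5$ with $f(m)=\lfloor(m+2)m/2\rfloor$.
\end{itemize}

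\textbf{Main obstacle.} The hard step is the inductive edge-loss estimate: showing that deleting a well-chosen vertex decreases $\gamma$ by at most one while removing at most $n-k$ edges. Parity makes the floor delicate. I would first try choosing a vertex $u$ with an isolated-in-$D$ property, i.e.\ a $D$-vertex that is its own private neighbour. If none exists, every $d\in D$ has an external private neighbour, and the edges from those private neighbours can be bounded directly.

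\textbf{Uniqueness.} I would trace equality through the induction. Equality forces $k-2$ isolated vertices, and the remaining $m+2$ vertices must have degree $\ge m$ within a graph of domination number $2$, i.e.\ the complement of a minimum edge cover.
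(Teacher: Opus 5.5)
The paper gives no proof of this statement: it is quoted from Vizing and used only to obtain Corollary~\ref{cor: vizing and phi}. Your sketch therefore has to stand on its own, and it does not. The inductive step is the whole content of the theorem, yet it is left as an acknowledged ``main obstacle'', and the criterion you propose for it is the wrong one.

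The bound $f(m)=\lfloor m(m+2)/2\rfloor$ depends only on $m=n-\gamma$. If you delete (or identify) one vertex and $\gamma$ drops by one, then $m$ is unchanged, so not a single lost edge can be absorbed unless the vertex was isolated. Hence ``$\gamma$ drops by one while at most about $m+1$ edges are lost'', combined with $f(m+1)-f(m)$, does not close the induction. The increment $f(m)-f(m-1)\in\{m,m+1\}$ is available only when $n$ drops and $\gamma$ does not. That case is in fact trivial: if $\gamma(G-u)\ge\gamma(G)$ for some $u$, then $\deg u\le\Delta\le m\le f(m)-f(m-1)$, and induction on $n$ finishes. Everything therefore hinges on vertex-critical graphs, where $\gamma(G-u)=\gamma(G)-1$ for every $u$ (e.g.\ the cycles $C_{3j+1}$). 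Your private-neighbour remarks give no edge bound there, and the uniqueness paragraph simply asserts what has to be proved.

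Your first idea, splitting off $N[v]$ for a vertex $v$ of maximum degree, can be completed, but it needs an estimate you never state.
\begin{enumerate}
\item Put $R=V\setminus N[v]$. For $u\in N(v)$, let $a_u,b_u$ be the numbers of neighbours of $u$ in $R$ and in $N(v)$.
\item Since $\{u,v\}\cup(R\setminus N(u))$ dominates $G$, you get $a_u\le m-\Delta+1$. Together with $a_u+b_u\le\Delta-1$, this gives $a_u+b_u/2\le m/2$. Hence at most $\Delta(m+2)/2$ edges meet $N(v)$.
\item For $k\ge 3$, $G[R]$ has domination number at least $k-1\ge 2$ and $|R|-\gamma(G[R])\le m-\Delta$. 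Induction then yields $e(G)\le \Delta(m+2)/2+f(m-\Delta)\le m(m+2)/2-\Delta(m-\Delta)/2$, which is the bound.
\item For uniqueness, equality forces $\Delta=m$ and $R$ independent of size $k-1$. It also forces, up to one half-unit defect when $m$ is odd, every $u\in N(v)$ to have degree $m$ and exactly one neighbour in $R$.
\item Take suitably chosen $u_1,u_2\in N(v)$ with distinct neighbours $r_1\ne r_2$ in $R$. Then $\{u_1,u_2\}\cup(R\setminus\{r_1,r_2\})$ is a dominating set of size $k-1$, a contradiction. So only one vertex of $R$ is non-isolated, which pins down the extremal graph.
\end{enumerate}
None of these steps appears in your proposal.
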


Combining (\ref{eq: dominated and domination}) and Theorem~\ref{thm: vizing}, we have the following corollary.

\begin{corollary}\label{cor: vizing and phi}
	For a graph $G$ with $\phi(G) \le s$, we have $e(G) \le \lfloor s(s+2)/2 \rfloor$.
	The equality holds only when $G$ is the graph obtained by removing a minimum edge cover from the complete graph $K_{s+2}$ and then adding some isolated vertices.
\end{corollary}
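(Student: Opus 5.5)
The corollary follows by translating the condition $\phi(G)\le s$ into a lower bound on the domination number via (\ref{eq: dominated and domination}), and then applying Theorem~\ref{thm: vizing}. Let $G$ have $n$ vertices, with $\phi(G)\le s$. Then $\gamma(G)=n-\phi(G)\ge n-s$; write $k=\gamma(G)$. Isolated vertices belong to every dominating set, so the natural setting is to compare $e(G)$ with the count given by Vizing's bound in terms of $n-k$.

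\emph{Case $k\ge 2$.} Theorem~\ref{thm: vizing} gives
\[
e(G)\le \left\lfloor \frac{(n-k+2)(n-k)}{2}\right\rfloor .
\]
The function $m\mapsto (m+2)m/2$ is increasing in $m=n-k\ge 0$, and here $m=n-k=\phi(G)\le s$. Hence $e(G)\le\lfloor s(s+2)/2\rfloor$.

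\emph{Case $k\le 1$.} Since $k\ge n-s$, we get $n\le s+1$. Therefore $e(G)\le\binom{s+1}{2}=s(s+1)/2\le \lfloor s(s+2)/2\rfloor$, because $s(s+2)/2-s(s+1)/2=s/2$. For $s\ge 1$ this inequality is strict, since $\lfloor s(s+2)/2\rfloor$ is either $s(s+2)/2$ or $s(s+2)/2-1/2$, and in both cases exceeds $s(s+1)/2$ when $s\ge1$. (For $s=1$: $\lfloor 3/2\rfloor=1=\binom22$, so equality is possible here with $G=K_2$. This coincides with $K_3$ minus a minimum edge cover, which is a single edge plus an isolated vertex, so it matches the stated extremal graph after adding an isolated vertex; this small case needs a quick check.)

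\emph{Equality.} Equality forces $k\ge 2$ (apart from the small-case check above). Strict monotonicity then forces $n-k=s$. Equality in Vizing's theorem further forces $G$ to be $K_{s+2}$ minus a minimum edge cover together with $k-2$ isolated vertices, which is exactly the claimed structure.

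The only delicate point is this boundary case $k\le1$ with small $s$. It requires checking that the floor does not create additional extremal graphs, or that any such graph still has the stated form. Everything else is a direct substitution into Theorem~\ref{thm: vizing}.
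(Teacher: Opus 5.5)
Your route is the paper's: there the corollary is obtained in one line by combining $\phi(G)+\gamma(G)=v(G)$ with Theorem~\ref{thm: vizing}. Your explicit treatment of $\gamma(G)\le 1$, which Vizing's theorem excludes, is a point the paper passes over. For $s\ge 2$ it is correct: $n\le s+1$ gives $e(G)\le\binom{s+1}{2}<\lfloor s(s+2)/2\rfloor$. So equality forces $\gamma(G)\ge 2$ and $\phi(G)=s$, and Vizing's uniqueness finishes the argument.

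The one flaw is at $s=1$. Your claim of strictness ``for $s\ge 1$'' is false there, since both sides equal $1$, and your patch does not work. The graph $G=K_2$ has $\phi(G)=1$ and $e(G)=1$. Yet it is not obtained from $K_3$ minus a minimum edge cover by adding isolated vertices, because that graph already has three vertices, one of them isolated. Saying $K_2$ matches ``after adding an isolated vertex'' concedes exactly this.

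So for $s=1$ the uniqueness clause holds only up to isolated vertices, with $K_2$ as a genuine exception. You should state that, rather than assert that the two graphs coincide. This is a defect in the statement, not in your method. It is also harmless for the paper: in the proof of Theorem~\ref{thm: r=3 case} with $s=1$, the graph $G_2$ (which has no isolated vertices) is precisely $K_2$, and it gives the same extremal hypergraph.
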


Later, Fulman \cite{FULMAN1994403} generalized Vizing's result so that it takes  the graph's maximum degree into consideration.

\begin{theorem}[Fulman \cite{FULMAN1994403}]\label{thm: Fulman domination number}
    Let $G$ be a graph on $n$ vertices with domination number $k$ and maximum degree $\Delta$.
    Then $e(G) \leq \lfloor \frac{(n - k)(n - k +2) - \Delta (n - k - \Delta)}{2} \rfloor$.
\end{theorem}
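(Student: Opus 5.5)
The statement is Fulman's theorem, cited from the literature. Here is a plan for proving it from Vizing's Theorem~\ref{thm: vizing}, by induction on $n$ and by exploiting a vertex of maximum degree.

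Let $v$ be a vertex of degree $\Delta$ and write $N=N(v)$. If $k=1$ then some vertex dominates everything, and the bound $\lfloor ((n-1)(n+1)-\Delta(n-1-\Delta))/2\rfloor$ follows from counting. Indeed, $\Delta = n-1$ in this case, so the right-hand side is $\lfloor (n^2-1)/2\rfloor \ge \binom{n}{2}$. Now take $k\ge 2$. Split the edges into three classes:
\begin{itemize}
\item edges inside $N\cup\{v\}$;
\item edges between $N$ and $R=V\setminus(N\cup\{v\})$;
\item edges inside $R$.
\end{itemize}

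The key observation is that the graph $G[R]$, together with the information about which vertices of $R$ have a neighbour in $N$, controls domination. Any dominating set $D'$ of $G[R]$ restricted to the vertices of $R$ with no neighbour in $N$, combined with $v$, dominates $G$. So if $R_0\subseteq R$ is the set of vertices not adjacent to $N$, then every set dominating $R_0$ inside $R$ yields a dominating set of $G$ of size one larger. Hence $\gamma_R(R_0)\ge k-1$, where $\gamma_R(R_0)$ is the minimum number of vertices of $R$ needed to dominate $R_0$.

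With this in hand, the count goes as follows. Edges inside $N\cup\{v\}$ number at most $\binom{\Delta+1}{2}$, but also at most $\Delta+\Delta(\Delta-1)/2$ with degrees capped at $\Delta$. Edges from $N$ to $R$ are at most $\Delta(\Delta-1)$ by the degree cap. For edges inside $R$, I would apply a relative version of Vizing's bound. If $|R|=n-1-\Delta$ and $R_0$ requires $k-1$ dominators, then the edges of $G[R]$ number at most $\lfloor (|R|-k+3)(|R|-k+1)/2\rfloor$. This is essentially Vizing applied to $R$ with domination number $\ge k-1$.

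Summing the three pieces and optimizing yields a bound. The main obstacle is that this naive sum does not telescope exactly to $\lfloor((n-k)(n-k+2)-\Delta(n-k-\Delta))/2\rfloor$. The degree-cap estimates on edges touching $N$ are too lossy, and one must trade them against the edges counted in $R$. The vertices of $R\setminus R_0$ are adjacent to $N$, which both uses $N$-degrees and relaxes the domination requirement.

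I would handle this by induction on $n$. Delete $v$ together with a suitable subset of $N$, or contract so that the domination number drops by exactly one. Then verify the inequality
\[
f(n,k,\Delta)-f(n',k-1,\Delta')\ge \text{(number of removed edges)},
\]
where $f$ is the claimed bound. The natural choice is to remove the closed neighbourhood $N[v]$, giving $n'=n-\Delta-1$. This removes at most $\binom{\Delta+1}{2}+\sum_{u\in N}(\deg u-|N(u)\cap N[v]|)$ edges. The resulting difference of quadratics in $\Delta$ must then be checked against the maximum-degree constraint, and I expect this final case analysis to be the genuinely delicate step. In particular, one has to handle the boundary cases $\Delta$ close to $n-k$, where the bound reduces to Vizing's.
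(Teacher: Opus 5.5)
The paper does not prove this statement; it quotes Fulman's result and uses it only in Case~1 of Lemma~\ref{lem: number of clique}, so a citation is enough there. As a standalone proof, however, your sketch has one false step and one genuine gap.

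The false step is the ``key observation''. A set $D'\subseteq R$ that dominates only $R_0$, together with $v$, leaves the vertices of $R\setminus R_0$ undominated, because their neighbours in $N$ are not in $D'\cup\{v\}$. For example, take the path $u_2\,v\,u_1\,x$ with $v$ as the maximum-degree vertex. Then $R=\{x\}$ and $R_0=\emptyset$, so $\gamma_R(R_0)=0<1=k-1$. Only $\gamma(G[R])\ge k-1$ is true. That is all you actually use when you apply Theorem~\ref{thm: vizing} to $G[R]$, so this part is repairable.

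The gap is the counting step, which you defer to a ``delicate case analysis'' that your tools cannot carry out. Write $c=n-k-\Delta\ge 0$, so that $|R|=k+c-1$ and $|R|-\gamma(G[R])\le c$. Vizing's theorem then gives $2e(G[R])\le c(c+2)$, and your induction cannot do better, since $\Delta'$ is uncontrolled and $G[R]$ may be Vizing-extremal. To reach the target you therefore need at most $\Delta(\Delta+c+2)/2$ edges meeting $N[v]$. Degree caps give only $\Delta^2$, which falls short whenever $2\Delta>n-k+2$. In particular it falls short in Vizing's own case $\Delta=n-k$ once $n-k\ge 3$. Removing $N[v]$ and checking $f(n,k,\Delta)-f(n',k-1,\Delta')\ge(\text{removed edges})$ with degree caps is just the naive sum again.

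The missing ingredient is a domination-based, not degree-based, bound on the edges between $N$ and $R$. For $u\in N$, the set $\{u,v\}\cup(R\setminus N(u))$ dominates $G$. Hence $k\le 2+|R|-|N(u)\cap R|$, i.e.\ $|N(u)\cap R|\le c+1$. Combine this with the degree-sum identity $\Delta+2e(G[N])+e(N,R)=\sum_{u\in N}\deg u\le\Delta^2$. This identity is the right way to trade edges inside $N$ against edges leaving $N$, instead of bounding them separately as you do. Together they give
\[
2e(G)\le \Delta^2+\Delta+e(N,R)+2e(G[R])\le \Delta^2+\Delta+\Delta(c+1)+c(c+2)=(n-k)(n-k+2)-\Delta(n-k-\Delta).
\]
When $\gamma(G[R])\le 1$, use $2e(G[R])\le |R|(|R|-1)\le c(c+1)$ in place of Vizing. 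So your three-way decomposition plus Vizing on $G[R]$ is the right framework, and no induction is needed. Without the bound $|N(u)\cap R|\le n-k-\Delta+1$, however, the argument does not close.
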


We say a graph $G$ is \textbf{domination-critical} if either it is a complete graph, or if the graph obtained by joining any two nonadjacent vertices by an edge has domination number equal to $\gamma(G) - 1$.
For two nonadjacent vertices $u$ and $v$, the identification of $u$ and $v$ is replacing $u$ and $v$ by a new vertex $w$ which is adjacent to all the neighbors of $u$ and $v$. Then we have the following lemma by Vizing~\cite{vizing1965estimate}.

\begin{lemma}[Vizing \cite{vizing1965estimate}]\label{lem: domination number after identifying}
    If a graph $H$ is obtained from a domination-critical graph $G$ by identifying two nonadjacent vertices, then $\gamma(H) = \gamma(G) - 1$.
\end{lemma}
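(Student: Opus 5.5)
The plan is to prove the two inequalities $\gamma(H)\le \gamma(G)-1$ and $\gamma(H)\ge \gamma(G)-1$ separately. Write $k=\gamma(G)$, let $u,v$ be the identified nonadjacent vertices and $w$ the new vertex, so that $N_H(w)=N_G(u)\cup N_G(v)$. If $G$ is complete, there are no nonadjacent pairs and the statement is vacuous. So I assume $G$ is not complete, and by domination-criticality $\gamma(G+uv)=k-1$.

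\emph{Upper bound.} Take a dominating set $D$ of $G+uv$ with $|D|=k-1$. Since $|D|<k$, $D$ does not dominate $G$, so the edge $uv$ must be essential. Without loss of generality $u\in D$, $v\notin D$, and $u$ is the only neighbour of $v$ in $D$ within $G+uv$. Set $D'=(D\setminus\{u\})\cup\{w\}$, which has size $k-1$. I check that $D'$ dominates $H$.
\begin{itemize}
\item The merged vertex $w$ lies in $D'$.
\item A vertex $x\notin D\cup\{u,v\}$ that was dominated in $G+uv$ by some $d\in D\setminus\{u\}$ is still adjacent to $d$ in $H$.
\item A vertex $x$ dominated by $u$ satisfies $x\in N_G(u)\subseteq N_H(w)$.
\end{itemize}
Hence $\gamma(H)\le k-1$.

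\emph{Lower bound.} Let $D'$ be a minimum dominating set of $H$ with $|D'|=m$. I show that $G$ has a dominating set of size $m+1$, which gives $m\ge k-1$.
\begin{itemize}
\item If $w\in D'$, take $(D'\setminus\{w\})\cup\{u,v\}$. Any vertex adjacent to $w$ in $H$ is adjacent to $u$ or $v$ in $G$, and all other adjacencies are unchanged, so this set dominates $G$.
\item If $w\notin D'$, then $w$ has a neighbour $y\in D'$, and $y$ is adjacent in $G$ to $u$ or $v$, say to $u$. Take $D'\cup\{v\}$. Every vertex other than $u,v$ not in $D'$ is dominated in $H$ by a vertex of $D'$, which is not $w$, and that same adjacency exists in $G$. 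The vertex $u$ is dominated by $y$, and $v$ lies in the set.
\end{itemize}

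The only delicate point is the upper bound. There one must argue that a $(k-1)$-dominating set of $G+uv$ necessarily contains exactly one of $u,v$ and uses the edge $uv$. The case where both $u$ and $v$ lie in $D$ does not use $uv$ at all, so $D$ would dominate $G$, contradicting $|D|<k$. The case where neither lies in $D$ is impossible for the same reason. The remaining check, that replacing $u$ by $w$ preserves domination, is routine.
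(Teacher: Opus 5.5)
Your proof is correct and complete. Both inequalities are handled properly, including the case analysis showing that a $(\gamma(G)-1)$-dominating set of $G+uv$ must contain exactly one of $u,v$.

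The paper gives no argument of its own for this lemma; it simply cites Vizing, so there is no internal proof to compare against, and your two-inequality argument supplies a self-contained justification. Note that your lower bound $\gamma(H)\ge\gamma(G)-1$ holds for identifying any two nonadjacent vertices in any graph. Domination-criticality enters only through $\gamma(G+uv)=\gamma(G)-1$ in the upper bound. The remark that $u$ is the only neighbour of $v$ in $D$ is true but not needed for the argument.
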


A natural question is to determine the maximum number of copies of a clique in a graph with given domination number.
With the help of above lemmas, we can prove the following lemma.

\begin{lemma}\label{lem: number of clique}
    Let $k_t(G)$ be the number of copies of $K_t$ in a graph $G$.
    For a graph $G$ on $n$ ($n \geq k+t-1$) vertices with $\gamma(G) \geq k$, then $k_t(G) \leq \binom{n-k+1}{t}$.
    The equality holds only when graph $G$ is the union of a clique $K_{n-k+1}$ and $k-1$ isolated vertices.
\end{lemma}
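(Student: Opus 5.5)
The plan is to reduce to domination-critical graphs and then use identification of vertices to induct on $k$. We use induction on $k$, and for fixed $k$ we take $G$ maximizing $k_t(G)$ among graphs on $n$ vertices with $\gamma(G)\ge k$. Adding edges never decreases $k_t$. So we may assume that adding any edge drops the domination number below $k$. Since adding an edge lowers $\gamma$ by at most one, we may assume $\gamma(G)=k$ exactly. We may also assume $G$ is domination-critical, i.e., every added edge lowers $\gamma$ by exactly one. (If $\gamma(G)>k$ we could keep adding edges until $\gamma=k$ without losing cliques, so the extremal graph can be taken with $\gamma = k$ and edge-maximal.)

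The base case is $k=1$: the bound $k_t(G)\le\binom{n}{t}$ is trivial, with equality iff $G=K_n$. For $k\ge 2$ the graph $G$ is not complete, so it has two nonadjacent vertices $u,v$. We identify them to get $H$ on $n-1$ vertices. By Lemma~\ref{lem: domination number after identifying}, $\gamma(H)=k-1$, and $n-1\ge (k-1)+t-1$. Induction then gives $k_t(H)\le\binom{(n-1)-(k-1)+1}{t}=\binom{n-k+1}{t}$.

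The key step is to compare $k_t(G)$ with $k_t(H)$. Every $K_t$ of $G$ avoiding both $u$ and $v$ survives in $H$. Every $K_t$ containing exactly one of $u,v$ maps to a $K_t$ containing $w$ (none contains both, since $uv\notin E$). However, a clique through $u$ and a clique through $v$ with the same remaining $t-1$ vertices collapse to the same clique in $H$, so the naive injection fails. This is the main obstacle.

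To handle it, I would choose the pair $u,v$ carefully. Alternatively, I would use a second route that bypasses identification: take a minimum dominating set $D$ with $|D| = k$ and argue directly. The route I would try first is a degree argument. Let $u$ have minimum degree $\delta$. Since $V\setminus N(u)$ dominates, we have $\gamma(G)\le n-\delta$, so $\delta\le n-k$. Moreover, $G-u$ has domination number at least $k-1$ (adding $u$ to a dominating set of $G-u$ dominates $G$). By induction on $n$ (with $n-1\ge (k-1)+t-1$),
\[
k_t(G)\le k_t(G-u)+k_{t-1}(G[N(u)])\le \binom{n-k+1}{t}+k_{t-1}(G[N(u)]).
\]
This is too weak by itself, since it adds a term. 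So I would refine it: if $\gamma(G-u)\ge k$, apply induction with the same $k$ to get $\binom{n-k}{t}+\binom{n-k}{t-1}=\binom{n-k+1}{t}$, using $\delta\le n-k$. Otherwise $\gamma(G-u)=k-1$, which forces $u$ to be isolated or to be essentially needed in every dominating set. If $u$ is isolated, then $\gamma(G-u)=k-1$ and $k_t(G)=k_t(G-u)\le\binom{n-k+1}{t}$ by induction.

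The remaining case, where $u$ is non-isolated, $\gamma(G-u)=k-1$, and $\delta\ge 1$, is where I expect the real work to lie. I would handle it with Theorem~\ref{thm: Fulman domination number} combined with Theorem~\ref{thm: Kruskal-Katona Theorem}. If $k_t(G)\ge\binom{n-k+1}{t}$, Kruskal--Katona gives $e(G)\ge\binom{n-k+1}{2}$. Fulman's bound shows this exceeds the maximum edge count for domination number $k$ unless $\Delta\ge n-k$ and the structure is tight. In the tight case, equality in Kruskal--Katona forces $G$ to be $K_{n-k+1}$ plus $n-(n-k+1)=k-1$ isolated vertices. That graph has domination number exactly $k$, which gives both the bound and the uniqueness statement.

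In fact, this Kruskal--Katona plus Vizing/Fulman comparison may settle everything directly. By Theorem~\ref{thm: vizing}, $e(G)\le\lfloor (n-k+2)(n-k)/2\rfloor$. Comparing with $\binom{n-k+1}{2}=(n-k+1)(n-k)/2$, this bound is larger by $(n-k)/2$. So Vizing alone leaves a gap, and Fulman's $\Delta$-term must be used to close it. That uses $\Delta\ge t-1$ (from the existence of a $K_t$) and the near-clique structure forced by having many $K_t$'s. Handling this gap is the main obstacle.
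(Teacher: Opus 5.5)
Your proposal has a genuine gap, and it sits where you suspect: both of your routes stall on graphs with $\Delta(G)=n-k$. The tool you propose for closing it cannot work there. When $\Delta=n-k$, the term $\Delta(n-k-\Delta)$ in Fulman's bound vanishes. Theorem~\ref{thm: Fulman domination number} then degenerates to Vizing's bound $\lfloor (n-k)(n-k+2)/2\rfloor$, which exceeds $\binom{n-k+1}{2}$ by $\lfloor (n-k)/2\rfloor$. So your claim that for $\Delta\ge n-k$ ``the structure is tight'', with equality in Kruskal--Katona forcing $K_{n-k+1}$ plus $k-1$ isolated vertices, is false. Vizing's extremal graph ($K_{n-k+2}$ minus a minimum edge cover, plus $k-2$ isolated vertices) has $\gamma=k$, $\Delta=n-k$, and $\lfloor (n-k)/2\rfloor$ more edges than $\binom{n-k+1}{2}$. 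Hence no argument that goes only through the edge count can prove the lemma in this regime. Fulman plus Kruskal--Katona does settle $1\le\Delta\le n-k-1$, since then $\Delta(n-k-\Delta)\ge n-k-1$ gives $e(G)\le\binom{n-k+1}{2}$; this is the paper's Case~1. Your minimum-degree induction likewise leaves the subcase $\delta\ge 1$, $\gamma(G-u)=k-1$ unresolved, and that subcase is not vacuous (e.g.\ $K_n$ minus a perfect matching with $k=2$). Finally, ``choose the pair $u,v$ carefully'' is never made concrete.

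The missing idea is how to choose the pair when $\Delta=n-k$. Let $v$ have degree $n-k$ and set $U=V(G)\setminus N[v]$, so $|U|=k-1$.
\begin{itemize}
\item For $k\ge 3$, $U$ is independent and no two vertices of $U$ share a neighbour. An edge $u_1u_2$ inside $U$ would make $(U\setminus\{u_2\})\cup\{v\}$ dominating. A common neighbour $w$ of $u_i,u_j$ would make $(U\setminus\{u_i,u_j\})\cup\{v,w\}$ dominating. Both sets have size $k-1$, a contradiction.
\item Since $N(u_i)\cap N(u_j)=\emptyset$, cliques through $u_i$ and cliques through $u_j$ cannot collapse. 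So identifying $u_i$ and $u_j$ gives $k_t(G)\le k_t(H)$, and Lemma~\ref{lem: domination number after identifying} plus induction on $k$ finishes.
\item For $k=2$ we have $|U|=1$, so no such pair exists and a separate argument is needed. The paper lets $Y$ be the vertices of $N(v)$ not adjacent to all of $N[v]$. Edge-maximality shows that every vertex of $Y$ is adjacent to the vertex of $U$ and that the non-edges inside $Y$ form a star forest. A direct triangle count then gives $k_3(G)\le\binom{n-1}{3}$, and Kruskal--Katona lifts this to $t\ge 4$.
\end{itemize}
Without these two ingredients the argument does not go through.
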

\begin{proof}
    % Let $G$ be a complete graph $K_{n-k+1}$ union $k-1$ isolate vertices.
    % It is easy to see that the domination number of $G$ is $k$ and there are $\binom{n-k+1}{t}$ copies of $K_t$ in $G$.
    Let $G$ be a graph on $n$ vertices with $\gamma(G) \geq k$ maximizing $k_t(G)$.
    If there are several candidates, we choose the graph with the maximum number of edges.
     Then the graph $G$ is domination-critical and $\gamma(G) = k$.
     It is clear that $\Delta(G) \le n-k$, otherwise, we could find a dominating set of size less than $k$.

     Case 1. If $\Delta(G) \leq n-k-1$, then we have
     \begin{equation*}
         e(G) \leq \left\lfloor \frac{(n - k)(n - k +2) - \Delta (n - k - \Delta)}{2} \right\rfloor \leq \left\lfloor \binom{n-k+1}{2} + \frac{1}{2} \right\rfloor = \binom{n-k+1}{2},
     \end{equation*}
     by Theorem~\ref{thm: Fulman domination number}.
    By Theorem~\ref{thm: Kruskal-Katona Theorem}, we have $k_t(G) \leq \binom{n-k+1}{t}$ and the equality holds only when $G$ is the union of a clique $K_{n-k+1}$ and some isolated vertices.

     Case 2. If $\Delta(G) = n-k$. We prove it by induction on $k$. When $k=1$, $G$ is a complete graph and we are done.

     \begin{figure}[t]
        \centering
        \includegraphics[width=0.5\textwidth]{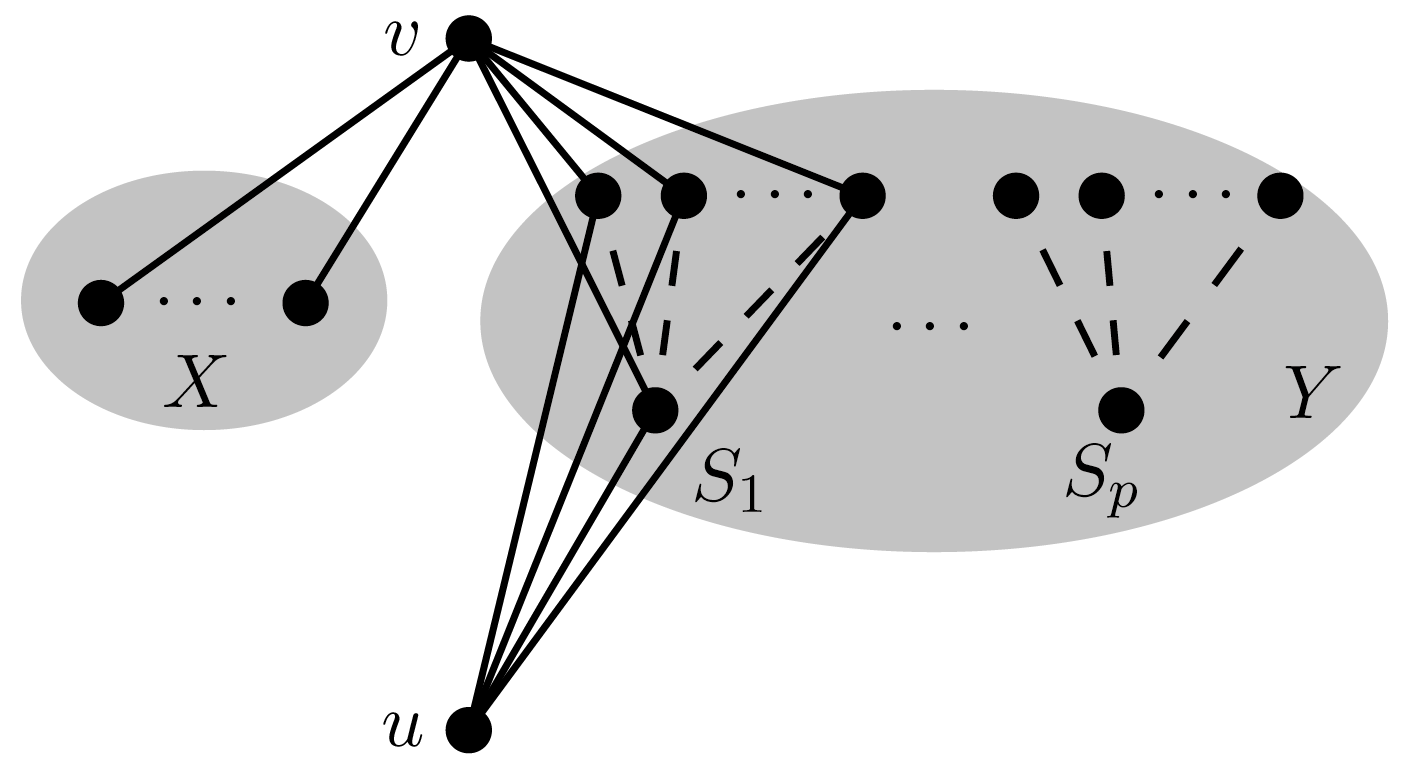}
        \caption{The dotted lines are the missing edges in $Y$.}
        \label{fig: graph}
     \end{figure}

     When $k = 2$, the condition $\gamma(G) \ge 2$ is equivalent to $\Delta \le n - 2$.
     We first prove the case when $t = 3$.
     Let $v$ be the vertex with maximum degree $\Delta = n-k$.
     Let $U = V(G) \setminus (N(v) \cup \{v\})$, then $U$ contains only one vertex, say $u$.
     Let $X$ be the set of vertices in $N(v)$ adjacent to all other vertices in $N[v]$. 
     Since $\Delta \le n-2$, for every $x \in X$, $ux$ is not an edge.
     Let $Y = N(v) \setminus X$.
     If $Y = \emptyset$, then $N[v]$ is a $K_{n-1}$ and we are done.
     Then we may assume $|Y| \ge 2$.
     We first claim that for every $y \in Y$, $uy$ is an edge.
     Otherwise, we can add the edge $uy$ to $G$ to get a graph with more copies of $K_3$ or more edges maintaining $\Delta \le n-2$.
     Let $M$ be the set of missing edges in $Y$, i.e., the non-edges in $Y$.
     Then $M$ covers all vertices in $Y$ by the definition.
     We claim that, for every $e \in M$, one of its endpoints has degree one in $M$.
     Otherwise, we can add the edge $e$ to $G$ to get a graph with more copies of $K_3$ or more edges maintaining $\Delta \le n-2$.
     So $M$ must be the union of disjoint stars.
     Assume $M$ is the union of $p$ stars, $S_1, S_2, \ldots, S_p$ where $S_i$ has $s_i$ vertices~(see Figure~\ref{fig: graph}).
     Then $\sum_{i=1}^{p} s_i = |Y|$ and $|M| = |Y| - p$.
     The number of triangles in $G$ is 
     \begin{equation*}
     \begin{aligned}
        & \binom{n-2}{2} + \binom{|Y| }{2} - 2\left( |Y| - p \right) + \binom{n-2}{3} - (|Y| - p) (n-4) + \sum_{i=1}^{p} \binom{s_i-1}{2}   \\
        = & \binom{n-1}{3} + \binom{|Y|}{2} - (|Y| - p) (n-2) + \sum_{i=1}^{p} \binom{s_i-1}{2}.
     \end{aligned}
     \end{equation*}
     
     Note that $s_i \ge 2$ for each $i$ and $\sum_{i=1}^{p} (s_i-1) = |Y| - p$, then $\sum_{i=1}^{p} \binom{s_i-1}{2} \le \binom{|Y|-2p+1}{2}$.
     Combining $|Y| \le n-2$ and $2p \le |Y|$, we have the number of triangles is at most $\binom{n-1}{3}$, and the equality holds only when $M = \emptyset$ and $Y = \emptyset$.
     Then $k_3(G) \le \binom{n-1}{3}$ and the equality holds only when $G$ is the union of a $K_{n-1}$ and an isolated vertex.

     Now we prove the case when $t \ge 4$.
     By Theorem~\ref{thm: Kruskal-Katona Theorem}, when $k_t(G) \ge \binom{n-1}{t}$, we have $k_3(G) \ge \binom{n-1}{3}$.
     Then by the previous analysis, $G$ must be the union of a $K_{n-1}$ and an isolated vertex and then $k_t(G) \le \binom{n-1}{t}$.

     Suppose $k \geq 3$ and the lemma holds for smaller $k$. 
     Let $v$ be the vertex with maximum degree $\Delta = n-k$.
     Let $U = V(G) \setminus (N(v) \cup \{v\})$, then $|U| = k-1 \ge 2$.
     We claim that there is no edge in $U$. 
     In fact, if there is an edge $u_1 u_2$ contained in $U$, then $(U \setminus \{u_2\}) \cup \{v\}$ forms a dominating set of size $k-1$, a contradiction.
     This means that any two vertices of $U$ cannot be contained in a same $K_t$.
     We then claim that for any two vertices $u_i, u_j \in U$, they have no common neighbor.
     Suppose otherwise, $w$ is a common neighbor of $u_i$ and $u_j$, then $(U \setminus \{u_i, u_j\}) \cup \{v, w\}$ is a dominating set of size $k-1$, a contradiction.
     Let $H$ be the graph obtained by identifying $u_i$ and $u_j$ for any two vertices $u_i, u_j \in U$.
     Now consider a $K_t$ in $G$.
     If $u_i,u_j$ are not in the $K_t$, then it is still a $K_t$ in $H$.
     If one of $u_i, u_j$ is in the $K_t$, then we have a $K_t$ in $H$ by replacing $u_i$ or $u_j$ by the new vertex $w$.
     Since $u_iu_j$ is not an edge, $u_i$ and $u_j$ cannot be both in the $K_t$.
     Then each $K_t$ in $G$ corresponds to a $K_t$ in $H$.
     And since there is no common neighbor of $u_i$ and $u_j$, no two $K_t$ in $G$ correspond to the same $K_t$ in $H$.
     Then $k_t(G) \le k_t(H)$.
     By Lemma~\ref{lem: domination number after identifying} and the induction hypothesis, we have $k_t(G) \le k_t(H) \le \binom{n-k+1}{t}$.
     Moreover, equality holds only when $H$ is the union of a clique $K_{n-k+1}$ and some isolated vertices.
     By analyzing the possible structures of $G$, we can obtain that $G$ must also be the union of a clique $K_{n-k+1}$ and some isolated vertices.
\end{proof}

As a corollary of Lemma~\ref{lem: number of clique}, we have the following result.

\begin{corollary}\label{coro: number of clique}
    Let $k_t(G)$ be the number of copies of $K_t$ in a graph $G$.
    For a graph $G$ with $\phi(G) \le s$ ($s \geq t-1$), then $k_t(G) \le \binom{s+1}{t} $.
    Equality holds when graph $G$ is the union of a clique $K_{s+1}$ and some isolated vertices.
\end{corollary}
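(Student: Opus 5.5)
The plan is to translate the hypothesis $\phi(G)\le s$ into a lower bound on the domination number and then invoke Lemma~\ref{lem: number of clique} directly. Let $G$ have $n$ vertices and $\phi(G)\le s$. By (\ref{eq: dominated and domination}), $\gamma(G)=n-\phi(G)\ge n-s$. If $n\le s$, then $G$ has at most $s$ vertices. The bound $k_t(G)\le\binom{n}{t}\le\binom{s+1}{t}$ is then trivial, so we may assume $n\ge s+1$.

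Set $k=n-s\ge 1$. The hypothesis of Lemma~\ref{lem: number of clique} requires $n\ge k+t-1$, i.e.\ $n\ge n-s+t-1$, which is exactly the assumption $s\ge t-1$. Since $\gamma(G)\ge k$, Lemma~\ref{lem: number of clique} gives
\[
k_t(G)\le\binom{n-k+1}{t}=\binom{s+1}{t}.
\]

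For the equality statement, observe that $K_{s+1}$ together with any number $m\ge0$ of isolated vertices has exactly $\binom{s+1}{t}$ copies of $K_t$. It remains to check that its dominated number is at most $s$. Each isolated vertex must lie in every dominating set. Within the clique, one vertex suffices to dominate the rest. Hence $\gamma=m+1$, and $\phi=(s+1+m)-(m+1)=s$.

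There is essentially no obstacle here. The only points needing care are checking that the condition $n\ge k+t-1$ of the lemma is equivalent to $s\ge t-1$, and treating the degenerate case $n\le s$ separately, where the lemma's hypothesis $k\ge1$ could fail.
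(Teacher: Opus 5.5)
Your reduction is the one the paper intends. The paper gives no separate proof and presents the corollary as an immediate consequence of Lemma~\ref{lem: number of clique}: $\gamma(G)=n-\phi(G)\ge n-s$, take $k=n-s$, and note that $n\ge k+t-1$ is the same as $s\ge t-1$. Your handling of $n\le s$ and your check that $K_{s+1}$ plus $m$ isolated vertices has $\gamma=m+1$, hence $\phi=s$, are also fine.

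You should not say there is no obstacle, though. The following problem comes from the paper, not from you: the statement, and the lemma you cite, are false for $t=2$.
\begin{itemize}
\item For $s=2$, the $4$-cycle has $\gamma(C_4)=2$, so $\phi(C_4)=2\le s$, yet it has $4>\binom{3}{2}$ edges.
\item More generally, for every $s\ge 2$, $K_{s+2}$ minus a minimum edge cover has $\phi=s$ and $\lfloor s(s+2)/2\rfloor>\binom{s+1}{2}$ edges. This is exactly what Corollary~\ref{cor: vizing and phi} predicts.
\item In the lemma's proof, the base case $k=2$ of Case~2 is carried out only for $t=3$, and via Kruskal--Katona for $t\ge 4$. $C_4$, with $n=4$, $k=2$ and $\Delta=n-k$, is precisely the case that is not covered.
\end{itemize}
So your application of the lemma is justified only for $t\ge 3$, and you should add that hypothesis. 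This costs nothing, because the corollary is used later only for $K_{t-1}$ with $t\ge 4$.

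That later use also needs the converse direction: equality forces $G$ to be $K_{s+1}$ plus isolated vertices. You get this from the lemma's equality clause with $k=n-s$. For the case $n\le s$, add the strict inequality $\binom{n}{t}<\binom{s+1}{t}$, which holds since $1\le t\le s+1$.
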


% -------------------------------------------------------------
% \section{Structure of $\Tr_r(M_{s+1})$-free hypergraph}\label{sec: structure}
\section{Structure of \texorpdfstring{$\Tr_r (M_{s+1})$}{Tr\_r(M\_\{s+1\})}-free hypergraphs}\label{sec: structure}

If $\mathcal{H}$ contains $M_{s+1}$ as a trace, then there exists a set of pairs of vertices $\{x_iy_i\}_{1 \le i \le s+1}$ such that for each $x_iy_i$, there exists a hyperedge $\{x_i, y_i, z_{i,1}, \dots, z_{i,r-2}\}$ with $z_{i,j} \notin \{x_i, y_i\}_{1 \le i \le s+1}$.
We call $\{x_iy_i\}_{1 \le i \le s+1}$ the \textbf{core} of the trace.

The following two lemmas are devoted to the proof of the lower bound construction.
\begin{lemma}\label{lem: hypergraph bounded dominated number size}
    Let $\mathcal{G}$ be a $r$-uniform hypergraph with $\phi(\mathcal{G}) \le s$ without isolated vertices.
    Then $v(\mathcal{G}) \le 3r(s+1) = O_{s,r}(1)$.
\end{lemma}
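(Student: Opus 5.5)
Since $\phi(\mathcal{G})+\gamma(\mathcal{G})=v(\mathcal{G})$ by (\ref{eq: dominated and domination}), it suffices to construct a large dominated set directly. Concretely, I would show that any $r$-uniform hypergraph without isolated vertices has a dominated set of size at least $v(\mathcal{G})/(3r)$, or at least $v(\mathcal{G})/(3r)-1$. Then $v(\mathcal{G}) > 3r(s+1)$ forces $\phi(\mathcal{G}) \ge s+1$, contradicting the hypothesis.

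The key step is a greedy matching-type argument. Pick a maximal collection of hyperedges $e_1,\dots,e_m$ that are pairwise disjoint. Let $C=\bigcup e_i$, so $|C| = rm$. Every vertex outside $C$ lies in some hyperedge, and by maximality that hyperedge meets $C$.

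I then split into two cases.

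\textbf{Case 1: $m$ is large.} Take one vertex $x_i$ from each $e_i$ and set $S=\{x_1,\dots,x_m\}$. Each $x_i$ is dominated by its complement, since $e_i\setminus\{x_i\}$ avoids $S$ by disjointness. So $S$ is dominated and $\phi \ge m$.

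\textbf{Case 2: $m$ is small.} Then $|V\setminus C|\ge v - rm$ is large. For each vertex $u\notin C$, choose a hyperedge $f_u\ni u$. I would then select a set $S\subseteq V\setminus C$ such that, for each $u\in S$, the set $f_u\setminus\{u\}$ avoids $S$. This is an independent set in an auxiliary digraph where $u\to w$ if $w\in f_u$. That digraph has out-degree at most $r-1$, so its underlying graph has average degree at most $2(r-1)$. Hence it is $(2r-1)$-colourable, since it is $(2r-2)$-degenerate. This yields $S$ with $|S|\ge |V\setminus C|/(2r-1)$, and each $u\in S$ is dominated via $f_u$.

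Combining the cases gives $\phi\ge \max\{m, (v-rm)/(2r-1)\}$. Taking the worse case, where $m=(v-rm)/(2r-1)$, gives $m=v/(3r-1)$. So $\phi \ge v/(3r-1)\ge v/(3r)$, and therefore $v\le 3r\phi\le 3rs$, which is comfortably within $3r(s+1)$.

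In fact Case 2's argument alone, applied to all vertices rather than just $V\setminus C$, already gives $\phi\ge v/(2r-1)$, so the matching step only serves as a safety net. The main point needing care is the degeneracy and colouring step. Every subgraph of the auxiliary graph has at most $(r-1)$ times as many edges as vertices, so each subgraph has a vertex of degree at most $2r-2$, and greedy colouring with $2r-1$ colours works. Each colour class $S$ then has the required property: no $w\in S$ lies in $f_u$ for another $u\in S$, so $f_u\setminus\{u\}\subseteq V\setminus S$.
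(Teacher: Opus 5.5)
Your proof is correct and is essentially the paper's argument: choose one hyperedge $f_u$ per vertex, form the digraph $u\to w$ for $w\in f_u\setminus\{u\}$ (out-degree at most $r-1$), and extract an independent set of size at least $v/(2r-1)$, which is automatically a dominated set. The paper extracts it by repeatedly picking a vertex of minimum in-degree and deleting its at most $2r-2$ in- and out-neighbours, which is equivalent to your degeneracy colouring; your disjoint-hyperedge case split is unnecessary, since the digraph step applied to all of $V(\mathcal{G})$ already gives $v(\mathcal{G})\le(2r-1)s\le 3r(s+1)$.
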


\begin{proof}
    We define a directed graph $D$ with vertex set $V(\mathcal{G})$.
    Since $\mathcal{G}$ contains no isolated vertex, for each $u \in V(\mathcal{G})$, there exists a hyperedge $e_u$ in $\mathcal{G}$ containing $u$, then add an arc from $u$ to $v$ if $v \in e_u$.
    Then the maximum outdegree in $D$ is at most $r-1$.
    Hence, the minimum indegree in $D$ is at most $r-1$.
    Suppose otherwise $v(\mathcal{G}) \ge 3r(s+1)$, then there exists an independent set of size at least $s+1$ in $D$ constructed by iteratively picking the vertex with minimum indegree and deleting all its in-neighbors and out-neighbors.
    Let $I$ be the independent set in $D$, then $I$ is a dominated set of size at least $s+1$ in $\mathcal{G}$ by the definition, a contradiction with $\phi(\mathcal{G}) \leq s$.
\end{proof}

\begin{lemma}\label{lem: lower bound general}
    Let $\mathcal{G}$ be a $(r-1)$-uniform hypergraph with $\phi(\mathcal{G}) \le s$ without isolated vertices.
    Let $U$ be the vertex set of $\mathcal{G}$, $|U| = O_{s,r}(1)$.
    Let $\mathcal{H}$ be an $r$-uniform hypergraph on $n$ vertices constructed by letting each vertex $u \notin U$ and a hyperedge in $\mathcal{G}$ be a hyperedge in $\mathcal{H}$.
    Then $\mathcal{H}$ does not contain $M_{s+1}$ as a trace.
\end{lemma}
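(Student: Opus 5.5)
Suppose for contradiction that $\mathcal{H}$ contains $M_{s+1}$ as a trace, and fix a core $\{x_iy_i\}_{1\le i\le s+1}$. Let $S=\{x_i,y_i\}_{1\le i\le s+1}$. For each $i$ there is a hyperedge $e_i\in E(\mathcal{H})$ with $e_i\cap S=\{x_i,y_i\}$. Every hyperedge of $\mathcal{H}$ has the form $\{w\}\cup f$ with $w\notin U$ and $f\in E(\mathcal{G})$, so every hyperedge contains exactly one vertex outside $U$. The plan is to extract from the core a set of $s+1$ vertices of $U$ whose complement in $U$ dominates $\mathcal{G}$. That set is dominated, contradicting $\phi(\mathcal{G})\le s$.

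Since $e_i$ has exactly one vertex outside $U$, at least one of $x_i,y_i$ lies in $U$. Choose $a_i\in\{x_i,y_i\}\cap U$; if both lie in $U$, pick either. Let $A=\{a_1,\dots,a_{s+1}\}$. These are $s+1$ distinct vertices of $U$, because the pairs $x_iy_i$ are disjoint. Let $b_i$ denote the other endpoint of the $i$-th pair.

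The key step is to show that $U\setminus A$ dominates $\mathcal{G}$, i.e., for each $a_i$ there is $f\in E(\mathcal{G})$ with $a_i\in f$ and $f\setminus\{a_i\}\subseteq U\setminus A$. We consider the two possibilities for $b_i$.

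\textbf{Case $b_i\notin U$.} Then $e_i=\{b_i\}\cup f$ for some $f\in E(\mathcal{G})$, and $a_i\in f$. Since $e_i\cap S=\{a_i,b_i\}$, the set $f\setminus\{a_i\}$ avoids $S$, in particular avoids $A$. So $f$ is the required hyperedge.

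\textbf{Case $b_i\in U$.} The hyperedge $e_i$ itself is not usable, since $f=e_i\setminus\{w\}$ contains $b_i$, and $b_i$ could be in $U\setminus A$ or may not matter. The way out is the freedom in where the outside vertex sits. Because $e_i=\{w\}\cup f$ with $f\in E(\mathcal{G})$, the set $\{w'\}\cup f$ is a hyperedge for every $w'\notin U$. The point $b_i$ is still in $f$, so this does not remove $b_i$ from $f$. Instead we handle this case by re-choosing the representative.

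If both $x_i,y_i\in U$, take $f=e_i\cap U\in E(\mathcal{G})$. Here $f\cap S=\{x_i,y_i\}$. We put $a_i:=x_i$ and do \emph{not} put $y_i$ into $A$. Then $f\setminus\{x_i\}$ consists of $y_i$ together with vertices outside $S$. All of these lie in $U\setminus A$, since $A\subseteq S$ and $y_i\notin A$. So $f$ works in this case as well. In both cases $A$ contains exactly one vertex from each pair, which is what the argument needs.

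Thus $A$ is a dominated set of size $s+1$ in $\mathcal{G}$, contradicting $\phi(\mathcal{G})\le s$.

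The main point of care is making sure that vertices of $S$ not chosen into $A$, namely the $b_i$'s, are allowed inside the dominating set $U\setminus A$. They are, because $A$ takes only one endpoint per pair. We also need to check that $f\setminus\{a_i\}$ never meets another $a_j$. This holds because $e_i\cap S=\{x_i,y_i\}$. We do not need $n$ large here beyond $V(\mathcal{H})\setminus U\neq\emptyset$, which is only used implicitly, since the core forces hyperedges to exist.
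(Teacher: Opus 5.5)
Your argument is correct and is essentially the paper's proof. You take from each core pair an endpoint $a_i\in U$, note that $e_i$ minus its unique vertex outside $U$ is a hyperedge of $\mathcal{G}$ meeting $A$ only in $a_i$, and conclude that $A$ is a dominated set of size $s+1$. The case split on $b_i$ is unnecessary, and your aside that $e_i$ is ``not usable'' when $b_i\in U$ is mistaken, as your own next paragraph shows: in every case $(e_i\cap U)\cap A\subseteq e_i\cap S\cap A=\{a_i\}$.
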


\begin{proof}
    Suppose otherwise $\mathcal{H}$ contains a trace of $M_{s+1}$ with core $\{x_1y_1, x_2y_2, \dots, x_{s+1}y_{s+1}\}$.
    Let $X = \{x_i\}_{1 \leq i \leq s+1}$, $Y = \{y_i\}_{1 \leq i \leq s+1}$.
    % Let $e_i = \{x_i, y_i, z_{i,1}, z_{i,2}, \dots, z_{i,r-2}\}$ denote the hyperedge in $\mathcal{H}$ containing $x_iy_i$ where $z_{i,j} \notin  X\cup Y$ for each $1 \le j \le r-2$.
    % Let $Z = \bigcup_{1 \leq i \leq s+1} \{z_{i,1}, z_{i,2}, \dots, z_{i,r-2}\}$.
    Note that for each $i$, at least one of $x_i$ and $y_i$ belongs to $U$ by the construction. 
    We may assume each $x_i$ is in $U$.
    Then for each $x_i$, let $e_i$ be the hyperedge containing $x_i,y_i$ and intersecting $X \cup Y$ at exactly $x_i$ and $y_i$.
    By the construction, there exists a hyperedge in $\mathcal{G}$ containing $x_i$ and intersecting $X$ at exactly $x_i$, denoted by $S_i$.
    Then $X$ is a dominated set with size $s+1$ in $\mathcal{G}$, since for each $x_i$, there exists a hyperedge $S_i$ intersecting $X$ at exactly $x_i$, a contradiction.
\end{proof}

Let $\mathcal{H}$ be an $r$-uniform hypergraph on $n$ vertices forbidding $M_{s+1}$ as a trace with the maximum number of hyperedges.
We say a $(r-1)$-set is \textbf{heavy} if it is contained in at least $r(s+1)+1$ hyperedges.
Otherwise, we say it is \textbf{light}.
Let $\mathcal{H}_1$ be the set of hyperedges in which all $(r-1)$-sets are light.
Let $\mathcal{H}_2$ be the set of hyperedges with at least one heavy $(r-1)$-set.
Then $\mathcal{H}_1$ and $\mathcal{H}_2$ are a partition of $E(\mathcal{H})$.
% Let $\mathcal{G}_1$ be the $(r-1)$-uniform hypergraph obtained by picking all light $(r-1)$-sets from each hyperedge in $\mathcal{H}_1$, and 
Let $\mathcal{G}_2$ be the $(r-1)$-uniform hypergraph obtained by picking all heavy $(r-1)$-sets as hyperedges.

\begin{claim}\label{claim: size of H1}
	$|\mathcal{H}_1| = O_{s,r}(1)$.
\end{claim}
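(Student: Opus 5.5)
The plan is to show, by downward induction on $|A|$, that every vertex set $A$ with $0 \le |A| \le r-1$ lies in at most $C_{|A|} = O_{s,r}(1)$ hyperedges of $\mathcal{H}_1$. The case $A = \emptyset$ is exactly the claim. Lightness of $(r-1)$-sets supplies the base case. Trace-freeness, used through the observation below, supplies each induction step.

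\textbf{Key observation.} Let $A$ be a vertex set with $|A| = a \le r-2$. Let $\mathcal{L}_A = \{e \setminus A : A \subseteq e \in \mathcal{H}_1\}$ be the link of $A$ in $\mathcal{H}_1$; it is an $(r-a)$-uniform hypergraph with $r-a \ge 2$. Then $\mathcal{L}_A$ has no $s+1$ pairwise disjoint hyperedges.

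To see this, suppose $f_1, \dots, f_{s+1}$ are pairwise disjoint members of $\mathcal{L}_A$. Choose a pair $\{x_i, y_i\} \subseteq f_i$ for each $i$, which is possible since $|f_i| \ge 2$, and set $S = \{x_i, y_i\}_{1 \le i \le s+1}$. The hyperedge $e_i = A \cup f_i$ meets $S$ exactly in $\{x_i, y_i\}$. Indeed, $A$ is disjoint from every $f_j$, and $f_i$ is disjoint from $f_j$ for $j \ne i$. Hence $\mathcal{H}_{|S}$ contains $M_{s+1}$, so $\mathcal{H}$ contains $M_{s+1}$ as a trace, a contradiction.

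\textbf{Induction.} Take a maximal family of pairwise disjoint hyperedges in $\mathcal{L}_A$. By the observation it has at most $s$ members. Its vertex set $T_A$ therefore has $|T_A| \le (r-a)s$, and by maximality $T_A$ meets every member of $\mathcal{L}_A$. So every hyperedge of $\mathcal{H}_1$ containing $A$ also contains $A \cup \{w\}$ for some $w \in T_A$. This gives
\[
  C_a \le (r-a)s \cdot C_{a+1}.
\]
For the base case $a = r-1$, every $(r-1)$-subset of a hyperedge of $\mathcal{H}_1$ is light. Hence any $(r-1)$-set $A$ lies in at most $r(s+1)$ hyperedges of $\mathcal{H}_1$, so $C_{r-1} \le r(s+1)$. (If $A$ lies in no hyperedge of $\mathcal{H}_1$ the bound is trivial.)

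Unwinding the recursion gives
\[
  |\mathcal{H}_1| = C_0 \le r(s+1) \prod_{a=0}^{r-2} (r-a)s = O_{s,r}(1).
\]

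The only delicate point is the key observation, specifically that each chosen hyperedge $e_i$ avoids the other pairs. This holds because the link members are disjoint from each other and from $A$. The observation is also where $r - a \ge 2$ is needed, so that each $f_i$ contains a pair to put in the core. Lightness enters only at the final level $a = r-1$, where the link becomes $1$-uniform and the matching argument no longer applies.
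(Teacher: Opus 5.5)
Your proof is correct, but it takes a different route from the paper.

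\textbf{The paper's route.} The paper works with the $2$-shadow graph $G_1$ of $\mathcal{H}_1$ and shows that $G_1$ has no matching of size $3r(s+1)$. A hyperedge carrying one matching pair may contain vertices of other pairs, so this step needs an auxiliary digraph on the pairs (out-degree at most $r-2$). A greedy independent set of size $s+1$ in that digraph then yields a genuine trace. The paper next takes a bounded vertex cover $U$ of $G_1$ and notes that every hyperedge of $\mathcal{H}_1$ has at most one vertex outside $U$. It invokes lightness once, for the $(r-1)$-subsets of $U$.

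\textbf{Your route.} You take matchings in links instead. The $f_i$ are pairwise disjoint and disjoint from $A$, so each $e_i$ automatically avoids the other pairs and the trace is immediate. The price is a downward induction on $|A|$: the cover argument is repeated $r-1$ times before lightness is used at the bottom level.

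\textbf{What each buys.} Your argument is more elementary: there is no digraph lemma and no Tutte--Berge, only a maximal matching used as a cover. Your key observation also holds verbatim for links of sets of size at most $r-2$ in all of $\mathcal{H}$. It gives the explicit bound $r!\,r\,s^{r-1}(s+1)$. The paper's one-shot argument additionally shows that $\mathcal{H}_1$ is concentrated around a bounded set $U$, which mirrors the structure later established for $\mathcal{H}_2$, although this is not used afterwards. Both proofs use only the lightness threshold, so either adapts to the modified threshold in the proof of Theorem~\ref{thm: matching and a graph}.
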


\begin{proof}
    Let $G_1$ be the $2$-shadow graph of $\mathcal{H}_1$.
    That is, the graph obtained by picking all pairs contained in some hyperedge in $\mathcal{H}_1$.
    First, we claim that $G_1$ is $M_{3r(s+1)}$-free.
    Suppose otherwise, there exists a matching $\{x_i y_i\}_{1 \leq i \leq 3r(s+1)}$ in $G_1$.
    Then we construct a directed graph $D$ with vertex set $\{1,2, \ldots, 3r(s+1)\}$.
    And for each $i$, there exists a hyperedge $e_i$ in $\mathcal{G}_1$ containing $x_i$ and $y_i$, then add an arc from $i$ to $j$ if $x_j \in e_i$ or $y_j \in e_i$.
    Then the maximum outdegree of a vertex in $D$ is at most $r-2$.
    By iteratively picking the vertex with minimum indegree and deleting all its in-neighbors and out-neighbors, we can get an independent set of size at least $s+1$ in $D$, denoted by $I$.
    Then we have a trace of $M_{s+1}$ with core $\{x_i y_i\}_{i \in I}$, a contradiction.

    Then by Tutte-Berge~\cite{berge1958couplage} formula, there exists a set $U$ with size at most $9r(s+1)$ such that $G_1 - U$ is independent.
    Note that every hyperedge in $\mathcal{H}_1$ corresponds to a $K_r$ in $G_1$.
    If a $K_r$ contains a vertex outside $U$, then the rest $r-1$ vertices must be in $U$.
    There are at most $\binom{9r(s+1)}{r-1} \cdot r(s+1)$ such hyperedges since the $(r-1)$-set contained in $U$ has to be light by the definition of $\mathcal{H}_1$.
    Then we have $|\mathcal{H}_1| \le \binom{9r(s+1)}{r-1} \cdot r(s+1) + \binom{9r(s+1)}{r} = O_{s,r}(1)$.
\end{proof}

\begin{claim}\label{claim: phi of G2}
    $\phi(\mathcal{G}_2) \leq s$.
\end{claim}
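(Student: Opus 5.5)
We argue by contradiction. Suppose $\phi(\mathcal{G}_2) \ge s+1$, and take a dominated set $X = \{x_1, \dots, x_{s+1}\}$ in $\mathcal{G}_2$. By the definition of a dominated set, $V(\mathcal{G}_2) \setminus X$ is dominating. So for each $i$ there is a heavy $(r-1)$-set $S_i$ with $x_i \in S_i$ and $S_i \cap X = \{x_i\}$. We may restrict to vertices of $\mathcal{G}_2$, since only those matter for $\phi(\mathcal{G}_2)$. The aim is to turn these heavy sets into a trace of $M_{s+1}$ in $\mathcal{H}$ with core $\{x_i y_i\}_{1 \le i \le s+1}$, which contradicts that $\mathcal{H}$ is $\Tr_r(M_{s+1})$-free.

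The key property of a heavy set $S_i$ is that it lies in at least $r(s+1)+1$ hyperedges of $\mathcal{H}$. Each such hyperedge has the form $S_i \cup \{w\}$, and distinct hyperedges give distinct outside vertices $w$. So $S_i$ has at least $r(s+1)+1$ possible extension vertices. We choose the partners $y_1, \dots, y_{s+1}$ greedily, in order of $i$.

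\begin{itemize}
\item At step $i$, choose $y_i$ as an extension vertex $w$ of $S_i$.
\item $y_i$ must avoid $X$.
\item $y_i$ must avoid the earlier partners $y_1, \dots, y_{i-1}$.
\item $y_i$ must avoid the union of all the sets $S_j$, for $1 \le j \le s+1$.
\end{itemize}

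The last union has size at most $(s+1)(r-1)$, so the total number of forbidden vertices is at most $(s+1) + (s+1)(r-1) = r(s+1)$. Since there are at least $r(s+1)+1$ extensions, a valid choice of $y_i$ always exists.

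Now let $e_i = S_i \cup \{y_i\}$ and put $T = X \cup \{y_1, \dots, y_{s+1}\}$. We check that $e_i \cap T = \{x_i, y_i\}$.

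\begin{itemize}
\item $S_i \cap X = \{x_i\}$ by the choice of $S_i$.
\item $y_i \notin X$ by construction.
\item For $j \ne i$, the vertex $y_j$ was chosen outside $\bigcup_k S_k$, so $y_j \notin S_i$.
\item $y_j \ne y_i$ by construction.
\end{itemize}

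So the $2(s+1)$ vertices of $T$ are distinct, and the hyperedges $e_i$ witness $M_{s+1}$ as a trace on $T$. This contradicts the choice of $\mathcal{H}$, so $\phi(\mathcal{G}_2) \le s$.

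The only delicate point is the definition of $\phi$ used here. A dominated set requires that for each $x \in X$ some hyperedge meets $X$ exactly in $x$. This is precisely what guarantees $S_i \cap X = \{x_i\}$, so the trace meets $T$ in exactly the pair $\{x_i, y_i\}$. The heaviness threshold $r(s+1)+1$ is exactly what makes the greedy count work.
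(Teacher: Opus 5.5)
Your proof is correct and is essentially the paper's argument: you take a dominated $(s+1)$-set $X$ with heavy sets $S_i$ satisfying $S_i\cap X=\{x_i\}$, then greedily extend each $S_i$ by a vertex outside $\bigcup_j S_j$ and the previously chosen partners, which yields a trace of $M_{s+1}$ with core $\{x_iy_i\}$. One bookkeeping fix: your tally $(s+1)+(s+1)(r-1)$ counts $X$ separately although $X\subseteq\bigcup_j S_j$, and it omits the at most $s$ earlier partners; the correct bound on the forbidden set is $(s+1)(r-1)+s<r(s+1)+1$, so the greedy step still goes through.
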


\begin{proof}
    Suppose otherwise, there exists a dominated set $X$ with $|X| = s+1$.
    Let $X = \{x_1, x_2, \dots, x_{s+1}\}$.
    For each $x_i$, there exists a heavy $(r-1)$-set $\{x_i, y_{i,1}, y_{i,2}, \dots, y_{i, r-2}\}$ and $\{y_{i,1}, y_{i,2}, \dots, y_{i, r-2}\}\subseteq V(\mathcal{G}_2) \setminus X$.
    Let $Y = \cup_{i=1}^{s+1} \{y_{i,1}, y_{i,2}, \dots, y_{i, r-2}\}$, $|Y| \le (r-2)(s+1)$.
    By the definition of heavy $(r-1)$-set, there exists a hyperedge $\{z_1, x_1, y_{1,1}, y_{1,2}, \dots, y_{1, r-2}\}$ such that $z_1 \notin X \cup Y$.
    For each heavy $(r-1)$-set $\{x_j, y_{j,1}, y_{j,2}, \dots, y_{j, r-2}\}$, by the definition of heavy $(r-1)$-set, there exists a hyperedge $\{z_j, x_j,  y_{j,1}, y_{j,2}, \dots, y_{j, r-2}\}$ such that $z_j \notin X \cup Y \cup \{z_1,\ldots, z_{j-1}\}$.
    Then it forms a trace of $M_{s+1}$ with core $\{z_1 x_1, z_2 x_2, \dots, z_{s+1} x_{s+1}\}$, a contradiction.
\end{proof}

% -------------------------------------------------------------
\section{Proofs of main theorems}\label{sec: main proofs}

\subsection{Proofs of Theorem \ref{thm: general r} and Theorem \ref{thm: r=3 case}}

In this subsection, we give the proofs of Theorem~\ref{thm: general r} and Theorem~\ref{thm: r=3 case}.

\begin{proof}[Proof of Theorem \ref{thm: general r}]
    The lower bound follows from Lemma~\ref{lem: hypergraph bounded dominated number size} and Lemma~\ref{lem: lower bound general}.
    Now we prove the upper bound.
    Combining Claim~\ref{claim: size of H1} and Claim~\ref{claim: phi of G2}, we have
    \begin{equation*}
        e(\mathcal{H}) \leq e(\mathcal{H}_1) + e(\mathcal{H}_2) = e(\mathcal{G}_2) \cdot n + O_{s,r}(1) \leq  f_{r-1}(s) \cdot n + O_{s,r}(1)
    \end{equation*}
\end{proof}

\begin{proof}[Proof of Theorem~\ref{thm: r=3 case}]
First we prove the lower bound.
Let $\mathcal{H}$ be the hypergraph constructed in Theorem~\ref{thm: r=3 case} and $G$ be the graph used in the construction.
It is easy to prove the construction has the desired number of hyperedges, then it remains to prove that $\mathcal{H}$ does not contain $M_{s+1}$ as a trace.
If $\mathcal{H}$ contains a trace of matching with size $s+1$, denote the core by $\{x_iy_i\}_{1 \le i \le s+1}$.
Notice that for each $i$, at least one of $x_i$ and $y_i$ is in $V(G)$.
We may assume $x_1, x_2, \ldots, x_{s+1} \in V(G)$.
We claim that the hyperedge inside $V(G)$ can not be used to form a trace.
Suppose the hyperedge containing $x_1y_1$ used in the trace, denoted by $e_1$,  is contained in $V(G)$, then $x_2, x_3, \ldots, x_{s+1} \in V(G) \setminus e_1$. 
Note that $|V(G)\setminus e_1| = s-1$, a contradiction.
Since the hyperedges contained in $V(G)$ can never be used, we can prove the result by Lemma~\ref{lem: lower bound general}.
Then we focus on the upper bound.

Let $\mathcal{H}$ be a $3$-uniform hypergraph on $n$ vertices forbidding $M_{s+1}$ as a trace with the maximum number of hyperedges.
Let $\mathcal{H}_1, \mathcal{H}_2$ be the sets of hyperedges defined in Section~\ref{sec: structure} and let $\mathcal{G}_2$ be the hypergraph defined in Section~\ref{sec: structure}.
When $r=3$, $\mathcal{G}_2$ is a graph, denoted by $G_2$ in the following.

Combining Claim~\ref{claim: size of H1} and Claim~\ref{claim: phi of G2} and the lower bound construction, we have 
\begin{equation}\label{eq: H size}
	\left\lfloor \frac{s(s+2)}{2} \right\rfloor (n-s-2) + \binom{s+2}{3} \le e(\mathcal{H}) \le e(\mathcal{H}_1) + e(\mathcal{H}_2) \leq O_{s,r}(1) + e(G_2)\cdot n.
\end{equation}
When $n$ is sufficiently large, we have $e(G_2) \geq \lfloor s(s+2)/2 \rfloor$.
By Claim~\ref{claim: phi of G2} and Corollary~\ref{cor: vizing and phi}, we have that $G_2$ is exactly the graph obtained by removing a minimum edge cover from the complete graph $K_{s+2}$ since $G_2$ has no isolated vertices.
Let $U$ be the vertex set of $G_2$, $|U| = s+2$. 
We prove the following claim:

\begin{claim}\label{claim: analysis of rest hyperedeges}
	Each hyperedge in $\mathcal{H}$ is either contained in $U$, or the union of a vertex outside $U$ and an edge in $G_2$.
\end{claim}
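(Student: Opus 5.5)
The plan is to argue by contradiction: assuming $\mathcal{H}$ has a hyperedge $h$ of neither allowed form, I will build a trace of $M_{s+1}$. The main tool is that every edge $xy$ of $G_2$ is heavy, hence lies in at least $3(s+1)+1$ hyperedges $\{x,y,z\}$. So whenever at most $3s+3$ vertices must be avoided, we can choose $z$ outside $U$ and outside that forbidden set (only $|U|=s+2$ of the forbidden choices lie in $U$ anyway).

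\emph{The general template.} Suppose we have one core pair $c_1c_2$ witnessed by $h$ with $h\cap S=\{c_1,c_2\}$. Suppose also we have a set $X\subseteq U$ of $s$ vertices disjoint from $\{c_1,c_2\}$ such that each $x\in X$ has a $G_2$-neighbour $y_x\in U\setminus(X\cup\{c_1,c_2\})$. Then for each $x$ in turn we pick a fresh $z_x\notin U$ with $\{x,y_x,z_x\}\in E(\mathcal{H})$. We require $z_x$ to avoid the third vertex of $h$ and all previously chosen $z$'s. The core $\{c_1c_2\}\cup\{xz_x\}_{x\in X}$ then has $s+1$ pairs, each witnessed by a hyperedge whose third vertex is not a core vertex. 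This is a trace of $M_{s+1}$, a contradiction. The whole proof thus reduces to choosing $X$ and the witnesses $y_x$ using the structure of $G_2$: $K_{s+2}$ minus a minimum edge cover, i.e.\ minus a vertex-disjoint union of stars covering $U$. In particular a vertex $x$ is non-adjacent in $G_2$ to two vertices $u,w$ only if $x$ is the centre of a star containing both $u$ and $w$.

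\emph{Case 1: $|h\cap U|\le 1$.} Then $h$ has two vertices $a,b\notin U$; take the core pair $ab$. Choose $u\in U$ with $h\cap U\subseteq\{u\}$, and choose $w\in U$ as follows:
\begin{itemize}
\item if $u$ is a leaf of a star with centre $c$, let $w=c$;
\item if $u$ is a centre, let $w$ be a leaf of its star.
\end{itemize}
Put $X=U\setminus\{u,w\}$, so $|X|=s$. Any $x\in X$ non-adjacent to both $u$ and $w$ would have to be the common star centre, which is $u$ or $w$ itself. So every $x\in X$ has a neighbour $y_x\in\{u,w\}$, and $u,w$ are not core vertices. The template applies.

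\emph{Case 2: $h=\{p,q,a\}$ with $p,q\in U$, $pq\notin E(G_2)$ and $a\notin U$.} Here $pq$ is an edge of the removed cover, so one of $p,q$ is a leaf whose only cover edge is $pq$; relabel so that $q$ is such a leaf. Take the core pair $pa$ witnessed by $h$; this is allowed because $q$ will not be a core vertex. Put $X=U\setminus\{p,q\}$. Every $x\in X$ is adjacent to $q$ in $G_2$, since $q$'s only missing edge goes to $p$. So take $y_x=q$ for all $x$, and the template finishes.

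The remaining hyperedges meet $U$ in exactly two vertices forming an edge of $G_2$, or lie inside $U$, which proves the claim. I expect the main obstacle to be Case 2: a naive choice of $X$ can force a witness $y_x$ to be a core vertex. The fix is to choose the leaf endpoint $q$ of the cover edge as the non-core vertex. This needs the fact that a minimum edge cover is a star forest, so that every cover edge has a leaf endpoint.
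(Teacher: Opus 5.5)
Your proof is correct. Its skeleton matches the paper's: argue by contradiction, let the offending hyperedge supply one core pair whose third vertex stays off the core, and obtain the other $s$ pairs from heavy edges of $G_2$ completed by fresh vertices outside $U$. Your Case~1 merges the paper's first two cases, and your Case~2 is its third.

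Where you differ is in how those $s$ pairs are produced. The paper adds a pair of $e$ to $G_2$ to form $G'$ and uses that $G'$ has a dominated set of size $s+1$. This is forced by Corollary~\ref{cor: vizing and phi}, since $e(G')=\lfloor s(s+2)/2\rfloor+1$. The paper then notes that the added pair must be one of the $x_iy_i$ and reruns the argument of Claim~\ref{claim: phi of G2}. You instead write the dominated set down explicitly from the star-forest structure of the removed minimum edge cover: $X=U\setminus\{u,w\}$ for a cover edge $uw$ in Case~1, and $X=U\setminus\{p,q\}$ with the leaf $q$ as common witness in Case~2.

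The paper's route uses only that $G_2$ is edge-extremal for $\phi\le s$. It therefore needs neither Vizing's uniqueness statement nor any knowledge of what $G_2$ looks like. Yours needs that exact structure, but it is fully explicit and avoids the rather terse step ``the added pair must occur among the $x_iy_i$''.

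One detail to state explicitly: each $z_x$ must also avoid the core vertices outside $U$, namely $a,b$ in Case~1 and $a$ in Case~2, not only the third vertex of $h$ and earlier $z$'s. This costs nothing. The pair $xy_x$ has at least $3s+4-s=2s+4$ third vertices outside $U$, while at most $s+2$ outside vertices are excluded.
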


\begin{proof}
	Suppose otherwise, there exists a hyperedge $e$ in $\mathcal{H}$ which violates the claim.
	Then $e$ is one of the following:
	\begin{enumerate}
		\item $e$ is contained in $V(\mathcal{H})\setminus U$;
		\item $e$ contains two vertices outside $U$ and one vertex in $U$.
		\item $e$ contains one vertex outside $U$ and two vertices in $U$, but the two vertices in $U$ are not adjacent in $G_2$.
	\end{enumerate}

	For case 1, assume $e = w_1w_2w_3$.
	Let $G'$ be the graph obtained by adding $w_1w_2$ to $G_2$.
	Then $G'$ contains a dominated set of size $s+1$, say $X = \{x_1, x_2, \ldots, x_{s+1}\}$.
	For each $x_i$, there exists a neighbor $y_i$ outside $X$ in $G'$.
	Claim that $w_1w_2$ must be contained in $\{x_iy_i\}_{1 \le i \le s+1}$.
	Otherwise, it would violate that $\phi(G_2) \le s$.
	We may assume $w_1w_2 = x_1y_1$.
	Then there exists a hyperedge $x_1y_1z_1$ where $z_1 = w_3 \notin X \cup Y$.
	For $x_iy_i$, $i \ge 2$, since it is a heavy pair, there exists a hyperedge $x_iy_iz_i$ where $z_i \notin X \cup Y \cup \{z_1, \ldots, z_{i-1}\}$.
	It would result in a trace of $M_{s+1}$ with core $\{z_1x_1, z_2x_2, \ldots, z_{s+1}x_{s+1}\}$, a contradiction.

	For case 2, assume $e = uw_1w_2$ where $u \in U$ and $w_1, w_2 \notin U$.
	Similarly, let $G'$ be the graph obtained by adding $uw_1$ to $G_2$ and then we can conclude by a similar argument as in Case 1.

	For case 3, assume $e = wu_1u_2$ where $u_1,u_2 \in U$, $w \notin U$ and $u_1u_2 \notin E(G_2)$.
	Similarly, let $G'$ be the graph obtained by adding $u_1u_2$ to $G_2$ and then we can conclude by a similar argument as in Case 1.
\end{proof}

By Claim~\ref{claim: analysis of rest hyperedeges}, we have 
\begin{equation}\label{eq: hyperedeges}
	e(\mathcal{H}) \le \left\lfloor \frac{s(s+2)}{2} \right\rfloor (n-s-2) + \binom{s+2}{3},
\end{equation}

and equality holds only when $\mathcal{H}$ is the hypergraph constructed in Theorem~\ref{thm: r=3 case}.
\end{proof}

\subsection{Proof of Theorem \ref{thm: generalized Turan number} and~\ref{thm: generalized Turan number r=3}}
\begin{proof}[Proof of Theorem \ref{thm: generalized Turan number}]
    First we prove the lower bound.
Let $\mathcal{H}$ be the hypergraph constructed in Theorem~\ref{thm: generalized Turan number} and $\mathcal{G}$ be the hypergraph used in the construction.
Suppose otherwise, there exists a trace of $M_{s+1}$ with core $\{x_1y_1, x_2y_2, \ldots, x_{s+1}y_{s+1}\}$.
Let $X = \{x_i\}_{1 \le i \le s+1}$.
By the construction, for each $i$, at least one of $x_i, y_i$ is in $U$.
We may assume $x_1, x_2, \ldots, x_{s+1} \in U$.
Note that each edge in $\mathcal{H}$ contains a $(r-1)$-set which is a hyperedge in $\mathcal{G}$ and if the hyperedge is contained in $U$, then every $(r-1)$-set is a hyperedge in $\mathcal{G}$.
By the definition of trace, for each $i$ there exists a hyperedge $e_i$ in $\mathcal{H}$ containing $x_iy_i$ and intersecting $X$ at exactly $x_i$.
Then there exists a $(r-1)$-set $s_i \subseteq e_i$ in $\mathcal{G}$ containing $x_i$ and intersecting $X$ at exactly $x_i$.
Then $X$ is a dominated set with size $s+1$ in $\mathcal{G}$, a contradiction.

    Now we prove the upper bound.
    Let $\mathcal{H}$ be the hypergraph on $n$ vertices forbidding $M_{s+1}$ as a trace with the maximum number of copies of $\mathcal{K}^r_{t}$.
    Let $\mathcal{H}_1, \mathcal{H}_2$ be the sets of hyperedges defined in Section~\ref{sec: structure} and let $\mathcal{G}_2$ be the hypergraph defined in Section~\ref{sec: structure}.
    Let $U$ denote the vertex set of $\mathcal{G}_2$.
    By Lemma~\ref{lem: hypergraph bounded dominated number size}, $|U| = O_{s,r}(1)$.

    First, if a copy of $\mathcal{K}^r_{t}$ contains a hyperedge in $\mathcal{H}_1$, we claim that the number of such copies is $O_{s,r}(1)$.
    Assume $e_1 \in \mathcal{H}_1$ is in a copy of $\mathcal{K}^r_{t}$.
    Pick a vertx $v \in e_1$, then $e_1\setminus \{v\}$ is light by the definition of $\mathcal{H}_1$.
    For each vertex $u \notin e_1$ from the $\mathcal{K}_t^r$, $(e_1 \setminus \{v\}) \cup \{u\}$ is a hyperedge.
    Then there are at most $r(s+1)$ such $u$ by the definition of light $(r-1)$-set.
    Combining Claim~\ref{claim: size of H1}, we have that the number of copies of $\mathcal{K}^r_{t}$ containing a hyperedge in $\mathcal{H}_1$ is $O_{s,r}(1)$.

    Then, the number of copies of $\mathcal{K}^r_{t}$ contained in $U$ is also $O_{s,r}(1)$ since $|U| = O_{s,r}(1)$.
    If a copy of $\mathcal{K}^r_t$ is not contained in $U$ and contains no hyperedge in $\mathcal{H}_1$, then we claim that there is exactly one vertex in the copy of $\mathcal{K}^r_t$ that is not contained in $U$.
    Suppose otherwise, let $u,v$ be two vertices in the copy of $\mathcal{K}^r_t$ that are not contained in $U$.
    Then pick any hyperedge $e$ containing $u,v$ in the copy of $\mathcal{K}^r_t$.
    Since the copy of $\mathcal{K}^r_t$ contains no hyperedge in $\mathcal{H}_1$, we have $e \in \mathcal{H}_2$, and there exists a heavy $(r-1)$-set in $e$.
    At least one of $u,v$ should be in the heavy $(r-1)$-set, and thus contained in $U$ by the definition of $\mathcal{G}_2$.

    Now we count the number of such copies of $\mathcal{K}^r_t$.
    Assume $v$ is the vertex not contained in $U$, then pick any $(r-1)$-set $S$ in $\mathcal{K}^r_t$ not containing $v$.
    Since the hyperedge $S\cup\{v\}$ is in $\mathcal{H}_2$ and $v \notin U$, the set $S$ must be heavy. 
    Then the $\mathcal{K}_t^r$ minus $v$ corresponds to a $\mathcal{K}_{r-1}^{t-1}$ in $\mathcal{G}_2$.
    Then by Claim~\ref{claim: phi of G2} and definition of $g_r(s,t)$, the number of such copies of $\mathcal{K}^r_t$ is at most $g_{r-1}(s, t-1) \cdot n$.

    As a conclusion, the number of copies of $\mathcal{K}^r_t$ in $\mathcal{H}$ is at most $g_{r-1}(s, t-1) \cdot n + O_{s,r}(1)$.
\end{proof}

\begin{proof}[Proof of Theorem~\ref{thm: generalized Turan number r=3}]
    Corollary~\ref{coro: number of clique} actually proves $g_{2}(s,t-1) = \binom{s+1}{t-1}$ for $s \ge t-2$.
    Clearly, when $s < t-2$, $g_{2}(s,t-1) = 0$.
    The remaining proof is devoted to solve the constant term.
    The lower bound construction can be verified by a similar argument as in the proof of Theorem~\ref{thm: r=3 case}.
    The details are omitted here.

    Let $\mathcal{H}$ be the hypergraph on $n$ vertices forbidding $M_{s+1}$ as a trace with the maximum number of copies of $K^3_{t}$.
    Let $\mathcal{H}_1, \mathcal{H}_2$ be the sets of hyperedges defined in Section~\ref{sec: structure} and let $\mathcal{G}_2$ be the hypergraph defined in Section~\ref{sec: structure}.
    When $r=3$, $\mathcal{G}_2$ is a graph, denoted by $G_2$ in the following.
    Let $U$ be the vertex set of $G_2$.

    By the proof of Theorem~\ref{thm: generalized Turan number}, the number of copies of $\mathcal{K}^3_{t}$ in $\mathcal{H}$ is at most $k_{t-1}(G_2) \cdot n + O_{s,r}(1)$.
    From the lower bound construction, we have $k^3_t(\mathcal{H}) \ge \binom{s+1}{t-1} \cdot (n-s-1) + \binom{s+1}{t}$.
    When $n$ is sufficiently large, we have $k_{t-1}(G_2) \ge \binom{s+1}{t-1}$.
    Then apply Corollary~\ref{coro: number of clique}, we have $k_{t-1}(G_2) = \binom{s+1}{t-1}$ and $G_2$ is a complete graph on $s+1$ vertices.
    By a similar argument as Claim~\ref{claim: analysis of rest hyperedeges}, we have that each hyperedge in $\mathcal{H}$ is either contained in $U$, or the union of a vertex outside $U$ and an edge in $G_2$.
    Then the number of copies of $\mathcal{K}^3_{t}$ in $\mathcal{H}$ is at most $\binom{s+1}{t-1} \cdot (n-s-1) + \binom{s+1}{t}$.
\end{proof}

\subsection{Proof of Theorem~\ref{thm: matching and a graph}}

\begin{proof}[Proof of Theorem~\ref{thm: matching and a graph}]
    We first prove the lower bound.
    Let $\mathcal{H}$ be the hypergraph defined in Theorem~\ref{thm: matching and a graph} and $\mathcal{G}$ be the hypergraph used in the definition of $\mathcal{H}$.
    Let $U$ be the vertex set of $\mathcal{G}$.
    By Lemma~\ref{lem: lower bound general}, $\mathcal{H}$ does not contain $M_{s+1}$ as a trace.
    Then it remains to prove that $\mathcal{H}$ does not contain $F$ as a trace.

    Suppose otherwise, there exists a trace of $F$ on the vertex set $S$ in $\mathcal{H}$, that is, there exists a mapping $\lambda$ from $V(F)$ to $S$ such that for every edge $uv \in E(F)$, there exists a hyperedge in $\mathcal{H}$ intersecting $S$ at exactly two vertices $\lambda(u)$ and $\lambda(v)$.
    First, by the construction, $\{v \in V(F): \lambda(v) \notin U\}$ is an independent set in $F$, denoted by $I$.
    Then we claim that there is a dominated copy of $(F-I,N(I))$ in $\mathcal{G}$ on $S \cap U$.
    First, for each $uv \in E(F)$ and $u,v \notin I$, there exists a hyperedge $e_{uv}$ in $\mathcal{H}$ intersecting $S$ at exactly two vertices $\lambda(u)$ and $\lambda(v)$.
    By the construction, there exists a $(r-1)$-hyperedge $s_{uv} \subseteq e_{uv}$ in $\mathcal{G}$ such that $s_{uv} \cap S \cap U = \{\lambda(u), \lambda(v)\}$.
    Then for each $uv \in E(F)$ with $u\notin I$ and $v \in I$, there exists a hyperedge $e_{uv_j}$ in $\mathcal{H}$ intersecting $S$ at exactly two vertices $\lambda(u)$ and $\lambda(v)$.
    Also, there exists a $(r-1)$-hyperedge $s_{uv} \subseteq e_{uv_j}$ in $\mathcal{G}$.
    Since $v \in I$, we have $s_{uv} = e_{uv_j} \setminus \{\lambda(v)\}$.
    Then $s_{uv}$ is a hyperedge in $\mathcal{G}$ intersecting $S \cap U$ at exactly $\lambda(u)$.
    From definition, there is a dominated copy of $(F-I,N(I))$ in $\mathcal{G}$ on $S \cap U$, a contradiction.

    Now we prove the upper bound.
    In the proof of this theorem, we modify the definition of heavy $(r-1)$-set and light $(r-1)$-set.
    We say a $(r-1)$-set is heavy if it is contained in at least $3r(s+1)+v(F)$ hyperedges.
    Otherwise, we say it is light.
    Let $\mathcal{H}$ be the hypergraph on $n$ vertices forbidding $M_{s+1}$ as a trace and $F$ as a trace with the maximum number of hyperedges.
    Let $\mathcal{H}_1, \mathcal{H}_2$ be the partition of $E(\mathcal{H})$, and $\mathcal{G}_2$ be the hypergraph defined in Section~\ref{sec: structure}.
    Let $U$ be the vertex set of $\mathcal{G}_2$.
    By Lemma~\ref{lem: hypergraph bounded dominated number size}, $|U| = O_{s,r}(1)$.
    With a modification of the proof of Claim~\ref{claim: size of H1} and Claim~\ref{claim: phi of G2}, we have that $|\mathcal{H}_1| = O_{s,r,v(F)}(1)$ and $\phi(\mathcal{G}_2) \le s$.

    Since $|U| = O_{s,r}(1)$, the number of hyperedges in $\mathcal{H}_2$ contained in $U$ is also $O_{s,r}(1)$.
    Then we claim that if $e$ is a hyperedge in $\mathcal{H}_2$ containing at least one vertex outside $U$, then $e$ must be the union of a vertex outside $U$ and a hyperedge in $\mathcal{G}_2$.
    First, $e$ contains a heavy $(r-1)$-set, thus, $e$ contains at most one vertex outside $U$.
    Then let $v$ be the vertex in $e \setminus U$, then $e\setminus \{v\}$ is heavy and thus $e$ is the union of a vertex~($v$) outside $U$ and a hyperedge~($e\setminus \{v\}$) in $\mathcal{G}_2$.

    Let $W$ be the collection of vertices in $V(\mathcal{H})\setminus U$ such that the number of hyperedges in $\mathcal{H}_2$ containing $w$ is at least $h_{r-1}(s, F) + 1$.
    % We would like to show that $|W| = O_{r,s,v(F)}(1)$.

    \begin{claim}\label{claim: upper bound of W}
        $|W| \le v(F) \cdot \binom{|U|^2}{h_{r-1}(s, F)+1} = O_{s,r,v(F)}(1)$.
    \end{claim}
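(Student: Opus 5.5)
Every $w \in W$ gets a "link": the family $L(w)$ of hyperedges of $\mathcal{G}_2$ such that $\{w\} \cup f$ is a hyperedge of $\mathcal{H}_2$. By the preceding paragraph, every hyperedge of $\mathcal{H}_2$ containing $w \notin U$ has the form $\{w\}\cup f$ with $f \in E(\mathcal{G}_2)$. Hence $|L(w)| \ge h_{r-1}(s,F)+1$, and $L(w)$ is a subset of $E(\mathcal{G}_2)$. The number of $(r-1)$-subsets of $U$ is at most $|U|^{r-1}$; the stated bound uses $|U|^2$, so I read it as counting the possible link patterns, the point being only that it is a constant depending on $s,r$. I will take from each $L(w)$ a sub-family $L'(w)$ of exactly $h_{r-1}(s,F)+1$ hyperedges. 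The number of possible choices of $L'(w)$ is bounded by the binomial coefficient in the claim. It therefore suffices to show that no fixed family $\mathcal{L}$ of $h_{r-1}(s,F)+1$ hyperedges of $\mathcal{G}_2$ can be the chosen sub-link of $v(F)$ distinct vertices of $W$. Pigeonhole then gives $|W| \le (v(F)-1)\cdot\#\{\text{choices}\}$, which is within the claimed bound.

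Suppose $w_1,\dots,w_{v(F)} \in W$ all have $L'(w_i)=\mathcal{L}$. Let $\mathcal{G}'$ be the $(r-1)$-graph on $U$ with edge set $\mathcal{L}$. Then $e(\mathcal{G}')=h_{r-1}(s,F)+1$, and $\phi(\mathcal{G}')$ is at most $s$, or at least I need to arrange this. This is the first subtle point: deleting edges can increase the dominated number, since a dominated set needs each vertex to have its own edge. I would handle it by noting that the definition of $h_{r-1}(s,F)$ only concerns $\phi \le s$, and use the contrapositive differently. Since $\mathcal{H}$ is $\Tr_r(M_{s+1})$-free, the argument of Claim~\ref{claim: phi of G2} applies to the hypergraph $\mathcal{L}$ together with the vertices $w_i$ and the heavy sets. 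Alternatively, I would work directly with $\mathcal{G}_2$ restricted to edges in $\mathcal{L}$. Either way, $\mathcal{L}$ has more than $h_{r-1}(s,F)$ edges, so either $\phi(\mathcal{L}) \ge s+1$ or $\mathcal{L}$ contains a dominated copy of $(F-I,N(I))$ for some independent $I$ of $F$. In the first case, since every edge of $\mathcal{L}$ lies in $\mathcal{G}_2$, the set is also dominated in $\mathcal{G}_2$ (adding edges only helps domination), contradicting Claim~\ref{claim: phi of G2}. So the first alternative is handled cleanly by monotonicity in the right direction.

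In the second case, let $\tau$ be the dominated copy of $(F-I,N(I))$ on $S' \subseteq U$. I will build a trace of $F$ in $\mathcal{H}$, reversing the lower-bound argument. Map each vertex of $I$ to a distinct $w_i$; there are enough since $|I| \le v(F)$. Let $S = S' \cup \{w_i\}$.
\begin{itemize}
\item For an edge $uv$ of $F-I$, the witnessing $f \in \mathcal{L}$ with $f\cap S' = \{\tau(u),\tau(v)\}$ is heavy. It therefore extends to a hyperedge $f\cup\{z\}$ with $z$ outside $S$ and outside previously used vertices, because the heaviness threshold $3r(s+1)+v(F)$ exceeds $|S|$.
\item For an edge $uv$ with $v \in I$ and $u \in N(I)$, condition 2 gives $f \in \mathcal{L}$ with $f\cap S'=\{\tau(u)\}$. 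Then $\{w_v\}\cup f$ is a hyperedge of $\mathcal{H}$, since $f \in L'(w_v)$, and it meets $S$ exactly in $\{\tau(u), w_v\}$.
\end{itemize}
This yields $F$ as a trace, a contradiction.

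The main obstacle is in the second bullet: $f$ might contain some other $w_j$. This cannot happen, because $f \subseteq U$ and $w_j \notin U$. The remaining care is ensuring that edges inside $F-I$ do not accidentally hit other $w_j$, which the choice of $z$ avoids.
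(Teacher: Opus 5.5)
Your proof is correct and follows the paper's argument. Pigeonhole over $(h_{r-1}(s,F)+1)$-subfamilies of the links gives $v(F)$ vertices of $W$ sharing a family $\mathcal{L}$ with $\phi(\mathcal{L})\le\phi(\mathcal{G}_2)\le s$, hence a dominated copy of some $(F-I,N(I))$, which lifts to a trace of $F$ via heaviness for edges of $F-I$ and the shared link for edges meeting $I$. Your initial worry that deleting edges could raise $\phi$ was unfounded, since a set dominated in a subhypergraph stays dominated in the larger one; your final monotonicity remark is exactly the step the paper leaves implicit.

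Two minor points. First, the number of possible subfamilies is $\binom{\binom{|U|}{r-1}}{h_{r-1}(s,F)+1}$, so the literal $|U|^2$ is only justified for $r=3$; the paper's proof has the same slip, and only the $O_{s,r,v(F)}(1)$ bound is used later. Second, you need not make the vertices $z$ avoid previously used ones: taking $S$ to be $S'$ plus only the $|I|$ assigned vertices $w_v$ gives $|S|=v(F)$, and a trace needs only $z\notin S$, which the heaviness threshold guarantees.
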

    \noindent
    \textbf{Proof of Claim~\ref{claim: upper bound of W}}:
    For any $w \in W$, let $\mathcal{G}_w$ be the link graph of $w$ in $\mathcal{G}_2$, that is, the graph of heavy $(r-1)$-sets such that there exists a hyperedge in $\mathcal{H}_2$ containing $w$ and the heavy $(r-1)$-set.

    Suppose otherwise, then by the pigeonhole principle, there exists $w_1,w_2,\ldots, w_{v(F)}$ such that the intersection of $\mathcal{G}_{w_1}, \mathcal{G}_{w_2}, \ldots, \mathcal{G}_{w_{v(F)}}$ has at least $h_{r-1}(s, F) + 1$ hyperedges.
    Let $\mathcal{G}_w$ be the intersection of $\mathcal{G}_{w_1}, \mathcal{G}_{w_2}, \ldots, \mathcal{G}_{w_{v(F)}}$.
    By the definition of $W$, $\mathcal{G}_w$ is a subgraph of $\mathcal{G}_2$ and contains at least $h_{r-1}(s, F) + 1$ hyperedges.
    % Then by the definition of $h_{r-1}(s, F)$, $\mathcal{G}_w$ contains a member of $\mathcal{D}(F)$ as a trace.
    Then by the definition, there is a dominated copy of $(F-I,N(I))$ in $\mathcal{G}_w$ for some independent set $I$ in $F$.
    Let $F'$ be the graph $F-I$ and $I = \{x_1, x_2, \ldots, x_k\}$, then there exists a mapping $\tau$ from $V(F')$ to a vertex set $S$ in $V(\mathcal{G}_w)$ satisfying the conditions in the definition of the dominated copy.
    In the following paragraph, when we say the graph $F'$, we mean the image of $F'$ under $\tau$ on $S$ in $\mathcal{G}_w$.
    We are going to find a trace of $F'$ on the vertex set $S' = S \cup \{w_1, w_2, \ldots, w_{v(F)}\}$, where $w_j$ plays the role of $v_j$ in $I$ for each $j$.

    % Let $F'$ be the member of $\mathcal{D}(F)$ and $I=\{x_1, x_2, \ldots, x_k\}$ be the independent set in $F$ such that $F' = F - I$.
    % Then there exists a vertex set $S$ with $|S| = v(F')$ such that $\{e\cap S : e \in E(\mathcal{G}_w)\}$ contains $F'$ as a subgraph.
    % In the following, when we say the graph $F'$, we mean the graph $F'$ on $S$ in $\{e\cap S : e \in E(\mathcal{G}_w)\}$.
    % We are going to find a trace of $F$ on the vertex set $S' = S \cup \{w_1, w_2, \ldots, w_k\}$, where $w_j$ plays the role of $v_j$ in $I$ for each $j$. 
    
    Suppose $uv$ is an edge in $F$.
    If $u,v \in V(F')$, then there exists a $(r-1)$-hyperedge $s_{uv}$ in $\mathcal{G}_w$ such that $s_{uv} \cap S = \{u,v\}$.
    By the definition of $\mathcal{G}_2$, $s_{uv}$ is a heavy $(r-1)$-set.
    Thus there exists a hyperedge $s_{uv} \cup \{z_{uv}\}$ such that $z_{uv} \notin S \cup \{w_1, w_2, \ldots, w_k\}$.
    It is impossible that both $u$ and $v$ are outside $V(F')$ since $I$ is an independent set.
    The only remaining case is that one of $u,v$ is in $V(F')$ and the other one is in $I$.
    Assume $u \in V(F')$ and $v = v_j \in I$.
    Then there exists a hyperedge $s_{uv_j}$ in $\mathcal{G}_w$ such that $s_{uv_j} \cap S = \{u\}$.
    % Since we have a copy of $F'$ in $\mathcal{G}_w$ on $S$, there exists a hyperedge $s_{uv_j}$ in $\mathcal{G}_w$ such that $s_{uv_j} \cap S = \{u,v_j\}$.
    Recall that $\mathcal{G}_w$ is a subgraph of $\mathcal{G}_{w_j}$, then $s_{uv_j} \cup \{w_j\}$ is a hyperedge in $\mathcal{H}$.
    Note that $s_{uv_j} \cup \{w_j\}$ intersects $S'$ at exactly two vertices $u$ and $w_j$.
    As a result, for every edge in $F$, there exists a hyperedge in $\mathcal{H}$ intersecting $S'$ at exactly two endpoints of the edge.
    It forms a trace of $F$, a contradiction.
    \hfill $\blacksquare$ \par
    \vspace*{.2cm}

    Since $|W| = O_{r,s,v(F)}(1)$, the number of hyperedges in $\mathcal{H}_2$ containing a vertex outside $U$ is at most $|W| \cdot |U|^r + h_{r-1}(s, F) \cdot n$.
    Combining the number of hyperedges in $\mathcal{H}_1$ and $\mathcal{H}_2$ contained in $U$, we have that the number of hyperedges in $\mathcal{H}$ is at most $ h_{r-1}(s, F) \cdot n + O_{r,s,v(F)}(1)$.
\end{proof}

% -------------------------------------------------------------
\section{Concluding remarks}\label{sec: concluding remarks}

To determine the coefficient $f_{r-1}(s)$ in Theorem~\ref{thm: general r} is a natural question.
We have the following conjecture.
\begin{conjecture}
    $f_r(s)$ achieves the maximum for either $\mathcal{K}^{r}_{s+r-1}$, or the hypergraph $\mathcal{G}$ on $s+r$ vertices obtained by removing a minimum edge set such that every $(r-1)$-set is covered at least once from the complete hypergraph $\mathcal{K}_{s+r}^{r}$.
    Especially, if the Steiner system $S(s+r, r, r-1)$ exists, then $f_r(s) = \binom{s+r}{r}- \binom{s+r}{r-1}/r$ in this case~(The Steiner system $S(s+r, r, r-1)$ is a set of $r$-element subsets of an $(s+r)$-element set such that each $(r-1)$-subset is contained in exactly one $r$-subset).
\end{conjecture}

We have a conjecture for $g_r(s,t)$ too.
\begin{conjecture}
    $g_r(s,t)$ achieves the maximum for the complete $r$-uniform hypergraph $\mathcal{G}$ on $s+r-1$ vertices.
\end{conjecture}

It is worth mentioning that, once we know the extremal structure for $f_r(s)$ and $g_r(s,t)$, we can determine the constant term in Theorem~\ref{thm: general r} and Theorem~\ref{thm: generalized Turan number} by a similar argument as in the proof of Theorem~\ref{thm: r=3 case} and Theorem~\ref{thm: generalized Turan number r=3}.

% -------------------------------------------------------------
\section*{Acknowledgement}
% M. Lu is supported by the National Natural Science Foundation of China (Grant 12171272 \& 12161141003).
% This research was supported by the National Natural Science Foundation of China (Grant 12171272 \&12161141003).
The research of Wang is supported by the China Scholarship Council (No. 202506210200) and the National Natural Science Foundation of China (Grant 12571372).

\noindent
The research of Cheng is supported by the National Natural Science Foundation of China (Nos. 12131013 and 12471334), Shaanxi Fundamental Science Research Project for Mathematics and Physics (No. 22JSZ009) and the China Scholarship Council (No. 202406290241).

\noindent
The research of Zhao is supported by the China Scholarship Council (No. 202506210250) and the National Natural Science Foundation of China (Grant 12571372).

% ------------ bib part ---------------------
\bibliography{ref}
\bibliographystyle{wyc4}

\end{document}